\begin{document}
\providecommand{\keywords}[1]{\textbf{\textit{Keywords: }} #1}
\newtheorem{theorem}{Theorem}[section]
\newtheorem{lemma}[theorem]{Lemma}
\newtheorem{prop}[theorem]{Proposition}
\newtheorem{kor}[theorem]{Corollary}
\theoremstyle{definition}
\newtheorem{defi}{Definition}[section]
\theoremstyle{remark}
\newtheorem{remark}{Remark}
\newtheorem{problem}{Problem}
\newtheorem{question}{Question}
\newtheorem{conjecture}{Conjecture}
\newtheorem{example}{Example}
\newtheorem{condenum}{Condition}

\def\p{\mathfrak{p}}
\def\q{\mathfrak{q}}
\def\s{\mathfrak{S}}
\def\Gal{\mathrm{Gal}}
\def\Ker{\mathrm{Ker}}
\def\Coker{\mathrm{Coker}}
\newcommand{\cc}{{\mathbb{C}}}   
\newcommand{\ff}{{\mathbb{F}}}  
\newcommand{\nn}{{\mathbb{N}}}   
\newcommand{\qq}{{\mathbb{Q}}}  
\newcommand{\rr}{{\mathbb{R}}}   
\newcommand{\zz}{{\mathbb{Z}}}  
\newcommand{\fp}{{\mathfrak{p}}}

\title{On Galois extensions with prescribed decomposition groups}
\author{Kwang-Seob Kim}
\address{Chosun University, Department of Mathematics. Gwangju, South Korea}
\author{Joachim K\"onig}
\address{Korea National University of Education, Cheongju, South Korea}

\begin{abstract}
We study the inverse Galois problem with local conditions. In particular, we ask whether every finite group occurs as the Galois group of a Galois extension of $\mathbb{Q}$ all of whose decomposition groups are cyclic (resp., abelian). This property is known for all solvable groups due to Shafarevich's solution of the inverse Galois problem for those groups. It is however completely open for nonsolvable groups. In this paper, we provide general criteria to attack such questions via specialization of function field extensions, and in particular give the first infinite families of Galois realizations with only cyclic decomposition groups and with nonsolvable Galois group. We also investigate the analogous problem over global function fields.
\end{abstract}
\maketitle
%
%

\section{Introduction}
The inverse Galois problem (over a number field $K$) asks whether every finite group $G$ occurs as a Galois group over $K$. While the problem is way out of reach at this point, heuristics have been proposed (e.g., by Malle and Bhargava) which imply that the answer should not simply be ``yes", but rather ``yes, and for systematic reasons". Such systematic reasons come from prescribing the local behavior of a Galois extension at a certain set $S$ of prime ideals of $K$, and then asking whether there exist Galois extensions of $K$ with group $G$ (for short: $G$-extensions) $F/K$ fulfilling these local conditions. A famous version of such an ``Inverse Galois problem with local conditions" is Grunwald's problem, which corresponds exactly to the case where $S$ is a fixed finite set of primes.

A different class of problem arises when $S$ is exactly the set of ramified primes of $F/K$. This leads to problems such as a strong ``Inverse Galois problem with prescribed inertia and/or decomposition groups". We present this problem in general form in the following question:

\begin{question}
\label{ques:decgps}
Let $G$ be a finite group,  $K$ a number field 
 and $I_1,\dots, I_r$ a collection of cyclic subgroups of $G$ such that $G$ equals the normal closure of  $\langle I_1,\dots, I_r \rangle$. For each $i=1,\dots, r$, let $D_i$ be a subgroup of $G$ with $I_i \trianglelefteq D_i$ and $D_i/I_i$ cyclic. Do there exist (tame) $G$-extensions  $L/K$, such that for each prime $p$ of $K$ ramified in $L$, the pair of inertia group and decomposition group at $p$ is conjugate to $(I_i,D_i)$ for some $i\in \{1,\dots, r\}$?
\end{question}

This paper aims at investigating some particularly interesting cases of Question \ref{ques:decgps}, presented in Questions \ref{ques:cyclic} and \ref{ques:loc_ab} below.

\begin{defi}
\label{def:loccyc}
Let $K$ be a number field. Call a Galois extension $L/K$ {\it locally cyclic} if all completions $L\cdot K_p/K_p$ at primes $p$ of $K$ are cyclic.
\end{defi}

 Since unramified extensions of $K_p$ are automatically cyclic, Definition \ref{def:loccyc} is about the decomposition groups at ramified primes of $L/K$, so as a special instance of Question \ref{ques:decgps}, we are lead to the following:

\begin{question}
\label{ques:cyclic}
Let $G$ be a finite group and $K$ a number field. Are there infinitely many, pairwise linearly disjoint, locally cyclic Galois extensions of $K$ with Galois group $G$?
\end{question}

Note that we include the condition of linear disjointness in Question \ref{ques:cyclic} in order not to trivialize the answer for certain small groups. Indeed, if one locally cyclic extension $L/K$ with group $G$ is known, then it is an easy task to construct infinitely many $G\times C_2$-extensions of $K$, all locally cyclic, and all containing the subextension $L/K$, simply by varying the quadratic extension; those extensions however should not really count as ``essentially new".

The following property is weaker, but somewhat more flexible than the locally cyclic property.
\begin{defi}
Let $K$ be a number field. Call a Galois extension $L/K$ {\it locally abelian} if all completions $L\cdot K_p/K_p$ at primes $p$ of $K$ are abelian.
\end{defi}

Note that the compositum of locally abelian extensions is again locally abelian (the analog is not true for locally cyclic extensions!). There is therefore a maximal locally abelian extension $K^{loc-ab}$ of $K$ (for locally cyclic, this does not hold). 

\begin{question}
\label{ques:loc_ab}
Let $G$ be a finite group and $K$ a number field.  Are there infinitely many, pairwise linearly disjoint, locally abelian Galois extensions of $K$ with Galois group $G$?
In particular, is $G$ a quotient of $\Gal(K^{loc-ab}/K)$?
\end{question}

A positive answer to Question \ref{ques:cyclic} (and a fortiori, the weaker Question \ref{ques:loc_ab}) is known for all finite solvable groups due to the nature of Shafarevich's solution of the inverse Galois problem for those groups, see Theorem \ref{thm:shaf}. We are not aware of any nonsolvable group for which Question \ref{ques:cyclic} has previously been answered.

Among the various authors which have studied locally cyclic and/or locally abelian extensions (in various contexts) are Ohtani (\cite{Ohtani}), Nomura (\cite{Nomura}), Caputo and Vinatier (\cite{CV}).

\subsection*{Relevance of locally cyclic and locally abelian Galois extensions}
\label{sec:relevance}
The questions about existence of locally cyclic, resp., locally abelian Galois extensions with prescribed Galois group are of interest by themselves, as special instances of an inverse Galois problem with prescribed local behavior. However, they are also relevant to certain other number-theoretical problems. We present a selection of such applications.

\subsubsection*{Embedding problems}

Locally cyclic extensions are quite useful when it comes to solving central embedding problems.
This is not a formal statement, and indeed, to guarantee solvability of such embedding problems, one requires the still stronger notion of a {\it Scholz extension} (cf.\ \cite[Chapter IV.10]{MM}). See however, e.g., Theorem \ref{thm:centr_ext} for a sample application using locally cyclic extensions (with certain extra conditions) with Galois group $S_5$ 
to construct such extensions also for certain nonsplit central extensions of that group. For an introduction into embedding problems, see Section \ref{sec:embed_basics}.

\subsubsection*{Unramified extensions over small number fields}

In general, the construction problem of unramified Galois extensions $K/k$ of number fields with given Galois groups is not so difficult if we impose no restrictions (on the degree of $k$, on the maximality of $K/k$, etc.), see the construction of Uchida (\cite{Uchida}) and Yamamoto (\cite{Yamamoto}). However, if we require the degree of $k$ to be ``small" for ``big" given Galois group $\Gal(K/k)$, the problem turns out to be highly difficult.  As will be explained in Section \ref{sec:unram}, if $G$ is the Galois group of a tame locally abelian extension $F/\mathbb{Q}$, then $G$ occurs as the Galois group of an unramified extension of some cyclic number field. One also obtains a relatively small base field. See Lemma \ref{lem:unram} for the precise result.

Indeed, it is a folklore conjecture that unramified $G$-extensions exist even over suitable {\it quadratic} number fields, which is however out of reach in general.

\subsubsection*{Weak approximation}
Let $V$ be a variety over a number field $K$. Recall that $V$ is said to satisfy {\it weak approximation} if for any finite set $S$ of primes of $K$, the set $V(K)$ of $K$-rational points is dense within the set $\prod_{p\in S} V(K_p)$, where $K_p$ denotes the completion of $K$ at $p$.
Locally cyclic extensions are known to occur naturally in relation to questions about weak approximation on certain varieties. In particular, if $K/k$ is a locally cyclic Galois extension of number fields, then the associated norm-one torus $R^1_{K/k}(\mathbb{G}_m)$ is known to satisfy weak approximation, due to, e.g., \cite[Corollary 2]{Vos}.
See also \cite[Lemma 6.7]{Frei15}, where it is shown that for abelian extensions $K/k$ of number fields, the converse implication holds as well.
%
%

\subsubsection*{Intersective polynomials}
Another specific application of locally cyclic extensions is to the problem of so-called {\it intersective polynomials}, that is, polynomials $f\in O_K[X]$ (necessarily reducible!) with Galois group $G$ which possess a root in every completion of $K$, but not in $K$. It is easy to see that intersective polynomials exist for every group which occurs as a Galois group (apart from cyclic groups of prime power order, for which there is a trivial group-theoretical obstruction), cf.\ \cite{Sonn09}. However, there is a group-theoretical lower bound for the number of irreducible factors of such a polynomial, namely the minimum number $m$ such that there exist proper subgroups $U_1,\dots, U_m$ of $G$ with $\bigcup_{x\in G, 1\le i\le m} U_i^x = G$ and  $\bigcap_{x\in G, 1\le i\le m} U_i^x = \{1\}$, cf.\ \cite[Proposition 1]{Sonn}. It is in general an open problem whether this lower bound can be obtained. However, from elementary considerations, a $G$-realization in which all decomposition groups are cyclic, immediately yields a positive answer for the latter question, and this approach has been used successfully to obtain such ``optimally intersective" realizations for several classes of groups; see \cite[Theorem 2]{Sonn} for solvable groups, and \cite{BubbSonn}, \cite{Koe18} as well as \cite[\S 4]{KRS} for some constructions of families of minimally intersective polynomials with non-solvable Galois groups. Even though we do not provide new infinite families of groups with optimally intersective realizations in this work, our criteria may well be used to obtain such families in the future.

\section{Structure and main results}
\label{sec:main_res}
The  following treatment is roughly composed of four parts. 
In Section \ref{sec:prereq}, we will study some prerequisites of function fields and their specializations, including Beckmann's result on ramification in specializations;  on simultaneously prime values of certain polynomials (Green-Tao-Ziegler's theorems), and on embedding problems. In Section \ref{sec:locab}, we will deal with the case of locally abelian extensions of $\qq$. 
Here, we provide general criteria (Proposition \ref{thm:inertia2} and Theorem \ref{thm:crit_locab}) suited to attack the problem via specialization of function field extensions. We will then produce several new locally abelian extensions with nonsolvable Galois groups (Corollary \ref{lem:appl_locab}).
In particular, we show:

\begin{theorem}
\label{thm:pgl}
There exist infinitely many primes $p$ such that for each $k\in \mathbb{N}$, the group $PGL_2(p)^k$ possesses locally abelian realizations over $\mathbb{Q}$.
\end{theorem}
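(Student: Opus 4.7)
The plan is to combine a regular $PGL_2(p)$-realization of $\mathbb{Q}(t)$ (known for infinitely many primes $p$) with the specialization criterion of Theorem \ref{thm:crit_locab} and the Green--Tao--Ziegler prime-values theorem summarized in Section \ref{sec:prereq}. For infinitely many primes $p$ one has a classical regular Galois extension $E_p/\mathbb{Q}(t)$ with Galois group $PGL_2(p)$, coming from a $\mathbb{Q}$-rational rigid triple of conjugacy classes of $PGL_2(p)$ of orders, say, $\{2,3,p\}$ or $\{2,2,n\}$; the rigidity method fixes explicitly the branch locus of $E_p/\mathbb{Q}(t)$ together with the cyclic inertia classes above each branch point.

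I would then apply Theorem \ref{thm:crit_locab} to $E_p/\mathbb{Q}(t)$. The criterion reduces the construction of a locally abelian $PGL_2(p)$-specialization to finding a rational value $t_0$ of the parameter such that (i) at each prime $\ell$ of $\mathbb{Q}$ ramifying in the specialization $E_p(t_0)/\mathbb{Q}$, the inertia group is one of the prescribed cyclic classes of $PGL_2(p)$, and (ii) the Frobenius at $\ell$ in $PGL_2(p)$ lies in the centralizer of this inertia. Via Beckmann's theorem, condition (i) translates into primality and squarefreeness conditions on the values of the polynomials cutting out the branch divisor evaluated at $t_0$, while condition (ii) translates into explicit congruence conditions on $t_0$ modulo those same primes. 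Simultaneous satisfiability of such a system -- finitely many congruence conditions together with the requirement that finitely many admissible linear forms in $t_0$ simultaneously take prime values -- is precisely what the Green--Tao--Ziegler theorem delivers. This produces a locally abelian $PGL_2(p)$-extension $L_p/\mathbb{Q}$.

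To upgrade from $PGL_2(p)$ to $PGL_2(p)^k$, I invoke the fact (noted in the introduction) that the compositum of locally abelian extensions is locally abelian: it thus suffices to produce $k$ pairwise linearly disjoint locally abelian $PGL_2(p)$-realizations of $\mathbb{Q}$ and take their compositum. Because $PSL_2(p)$ is simple for $p\ge 5$, two $PGL_2(p)$-realizations fail to be linearly disjoint only through a common subextension of degree dividing $2$, which can be prevented by an additional congruence condition on $t_0$. Green--Tao--Ziegler yields infinitely many admissible $t_0$, and iterating the argument (or invoking Hilbert irreducibility over the compositum of previously chosen realizations at each step) gives the desired $k$ pairwise disjoint locally abelian $PGL_2(p)$-extensions, whose compositum realizes $PGL_2(p)^k$ locally abelianly over $\mathbb{Q}$.

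The main obstacle, and the point that forces ``infinitely many" rather than ``all" primes $p$ in the statement, is the \emph{compatibility} between the two families of local conditions in Step 2: the primality conditions on branch-polynomial values (which fix the cyclic inertia) and the congruence conditions (which force Frobenius to centralize inertia). This compatibility demands that the centralizers in $PGL_2(p)$ of the chosen inertia classes be large enough to host a Frobenius lift of the shape dictated by Beckmann's analysis, and that the resulting Chebotarev-type conditions at the various ramified primes fit into a single non-empty congruence class for $t_0$. Verifying this compatibility for a suitable infinite family of $p$ -- picked out by explicit quadratic residue conditions coming from the rigidity data -- is the technical heart of the argument; once it is in place, Green--Tao--Ziegler and Hilbert irreducibility handle the remaining existence statements routinely.
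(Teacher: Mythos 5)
Your proposal takes a genuinely different, and substantially overcomplicated, route compared to the paper's proof, and it contains several concrete gaps.

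First, the machinery you invoke is the wrong one. The Green--Tao--Ziegler theorems are used in the paper only for \emph{locally cyclic} realizations (Section~\ref{sec:loccyc}); for the weaker \emph{locally abelian} property treated here, no appeal to simultaneously prime values of polynomials is required. The actual mechanism of Theorem~\ref{thm:crit_locab} is quite different from what you describe. It does not reduce the problem to a system of primality plus congruence conditions on the specialization point~$t_0$. Instead, it produces a \emph{new} regular $G$-extension $\tilde E/\mathbb{Q}(t)$ by pulling back $E/\mathbb Q(t)$ along $u=\sqrt[e_1]{\alpha t}$: Abhyankar's lemma kills the unique high-index branch point and leaves a regular extension all of whose finite branch points have inertia of order $\le 2$. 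Proposition~\ref{thm:inertia2} then finishes: by Beckmann's theorem, outside a finite bad set every ramified prime of a specialization has inertia of order $\le 2$ and is tame, hence has abelian decomposition group by Remark~\ref{rem:order2inertia} (for a tame $C_2$-inertia the decomposition group is a central, hence abelian, extension), and the bad primes are handled by choosing the specialization $p$-adically close to a value where local abelianness is already known (Krasner's lemma), followed by Hilbert irreducibility. In particular, your condition~(ii), that Frobenius should lie in the centralizer of inertia, is never imposed via congruences; it is automatic from the order-$2$ reduction.

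Second, your diagnosis of why only infinitely many~$p$ occur is incorrect. The restriction has nothing to do with any compatibility between primality and congruence conditions in a GTZ application. It comes from the hypothesis needed to quote the rigid regular $PGL_2(p)$-realizations of $[\text{MM}, \text{Cor.~8.10}]$, namely that at least one of $2,3,5,7$ be a nonsquare modulo~$p$. This fails for a positive-density (but not full) set of primes, whence ``infinitely many'' rather than ``all.'' Third, even if one insisted on pushing a GTZ-style argument through, your sketch silently assumes the branch divisor factors into $\mathbb Q$-rational linear forms and ignores the finite exceptional set $S_0$ of Beckmann's theorem, both of which required nontrivial extra work in the paper's proof of the $S_n$ case (Theorem~\ref{thm:dirprod}) and are not addressed here for $PGL_2(p)$.

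Your treatment of the passage from $G$ to $G^k$ via composita of pairwise linearly disjoint locally abelian $G$-extensions is correct and is exactly what Proposition~\ref{thm:inertia2} records.
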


Section \ref{sec:unram} discusses in detail the connection of locally abelian extensions with unramified extensions over cyclic number fields introduced above.
In particular, using Theorem \ref{thm:pgl}, we obtain many small cyclic number fields with unramified $PGL_2(p)$-extensions in Corollary \ref{cor:appl_unram}

In Section \ref{sec:loccyc}, we will deal with the case of locally cyclic extensions of $\qq$. Here our main goal is to provide the first examples of non-solvable finite groups with a positive answer to Question \ref{ques:cyclic}. Concretely, we will show (see Theorem \ref{thm:dirprod}):

\begin{theorem}
For any $n\le 10$ and $k\in \mathbb{N}$, there exist infinitely many linearly disjoint locally cyclic extensions of $\qq$ with Galois group $S_n^k$.
\end{theorem}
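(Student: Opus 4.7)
The plan is to realize $S_n^k$ as the Galois group of a compositum $L_1\cdots L_k$ of locally cyclic $S_n$-extensions of $\qq$ coordinated so that the compositum remains locally cyclic. For each $n\le 10$, one has an explicit regular $S_n$-cover $E/\qq(t)$ whose inertia groups at all branch points are transpositions (such covers are classical for small degrees, for instance via Mestre's rational $S_n$-covers or suitable rigid triples). Combined with the locally cyclic specialization criterion of Section \ref{sec:loccyc} (the locally cyclic analog of Theorem \ref{thm:crit_locab}), this yields infinitely many locally cyclic specializations $L/\qq$ of $E$ with $\Gal(L/\qq)=S_n$ whose ramification locus avoids any prescribed finite set of primes.

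The compatibility statement driving the compositum construction is the following: if $L_1,\dots,L_k$ are $\qq$-linearly disjoint locally cyclic $S_n$-extensions with ramification loci $R_1,\dots,R_k$ satisfying (i) the $R_i$ pairwise disjoint and (ii) for each $i$ and each $p\in R_i$, the prime $p$ splits completely in every $L_j$ with $j\ne i$, then $L_1\cdots L_k/\qq$ is locally cyclic with Galois group $S_n^k$. Indeed, at any unramified prime the decomposition group is automatically cyclic, and at a prime $p\in R_i$ condition (ii) forces the decomposition group in the compositum to equal the (cyclic) decomposition group of $L_i$ at $p$.

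The family $(L_i)$ is built inductively. Assuming $L_1,\dots,L_{k-1}$ are in place, one selects $t_k\in\qq$ so as to meet simultaneously: (a) $p$-adic congruence conditions at every $p\in R_1\cup\cdots\cup R_{k-1}$ forcing $p$ to be unramified and totally split in the specialization $L_k$; (b) the Hilbert irreducibility and local-cyclicity conditions (from the Section \ref{sec:loccyc} criterion) that make $L_k/\qq$ a locally cyclic $S_n$-extension linearly disjoint from $L_1\cdots L_{k-1}$; and (c) the requirement that each new ramified prime of $L_k$ --- a prime divisor of the value at $t_k$ of an explicit discriminant-type polynomial $P\in\zz[t]$, by Beckmann's theorem --- has trivial Frobenius in $\Gal(L_1\cdots L_{k-1}/\qq)$.

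The main obstacle is (c): one must force the prime divisors of $P(t_k)$ to lie in prescribed Chebotarev classes of $L_1\cdots L_{k-1}$ while preserving the freedom needed for (a) and (b). The device for this is the Green--Tao--Ziegler simultaneous-primes theorem recalled in Section \ref{sec:prereq}, applied to the irreducible factors of $P$ along an arithmetic progression of $t_k$ chosen by CRT so that the resulting prime values sit in the desired Chebotarev classes while also satisfying (a). Finally, iterating the whole construction, with the extra Chebotarev condition at each new stage that newly ramified primes split completely in the previously built $S_n^k$-extension, produces infinitely many pairwise linearly disjoint locally cyclic $S_n^k$-extensions of $\qq$.
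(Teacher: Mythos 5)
Your outline identifies the right tools (specializing a regular $S_n$-cover and invoking Green--Tao--Ziegler to force the relevant branch-point ``hits'' to be primes in prescribed Chebotarev sets), but it diverges from the paper's proof in several respects, and some of the divergences are gaps rather than alternatives.

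First, the criterion you invoke does not exist in the form you describe. There is no ``locally cyclic analog of Theorem \ref{thm:crit_locab}'': the correct locally cyclic criterion for specializations is the statement labelled (*) (or Theorem \ref{thm:crit}), and that statement comes with a finite exceptional set $S_0$ of primes (the set outside of which Theorems \ref{thm:beck} and \ref{thm:kln} apply). The entire difficulty of Theorem \ref{thm:dirprod} is to get rid of $S_0$. The paper does this via a very specific choice of cover, namely the splitting field of $X^n-t(X-1)$ with three branch points of ramification type ($n$-cycle, $(n-1)$-cycle, transposition), followed by a change of variable introducing two parameters $a,b$, and then a \emph{computer search} for explicit prime values $a$, $b$ (with $n^nb+a(n-1)^{n-1}$ also prime) that are split in the relevant residue extensions \emph{and} lie outside the exceptional set for the original cover. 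Your proposal does not engage with the exceptional set at all, and since this is precisely where the technical content of the proof lies, the gap is essential.

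Second, your choice of cover (all inertia groups transpositions) is not the paper's and is not obviously workable. A transposition in $S_n$ has normalizer $C_2\times S_{n-2}$, so the residue extension at a branch point can have Galois group as large as $S_{n-2}$; you would then need to impose Chebotarev conditions strong enough to make the decomposition groups at all ramified primes cyclic (e.g.\ residue degree $1$). That is feasible in principle but multiplies the number of branch points (by Riemann--Hurwitz, $n-1$ of them) and thus the number of simultaneous linear forms. The paper's cover has only three branch points and the residue extensions are small and explicit, which is what makes the verification that the chosen bad primes are admissible tractable.

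Third, your construction of $S_n^k$ as an inductively built compositum with pairwise disjoint ramification loci is genuinely different from the paper's, and strictly stronger than what the paper proves. In the paper, the $S_n^k$-extension is obtained as a specialization of a single $\mathbb{Q}$-regular $S_n^k$-cover (the splitting field of $\prod_i \tilde f(t-c_i,X)$); the ``bad'' primes $a$, $b$, $n^nb+a(n-1)^{n-1}$ are ramified in \emph{every} factor, and Krasner's lemma is used to force the local behavior at those primes to coincide across factors, so the resulting decomposition group in $S_n^k$ is diagonal, hence cyclic. The paper explicitly flags (after the proof) that its infinitely many extensions share a fixed set of ramified primes, i.e.\ disjoint ramification loci are \emph{not} achieved. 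Your approach would require infinitely many choices of ``good'' bad-prime triples $(a,b,c)$, which the paper does not establish (and indeed only finds finitely many by computer). Finally, your mechanism ``CRT gives arithmetic progressions forcing Chebotarev classes'' is not quite right: congruence conditions only control Frobenius in cyclotomic subfields; for genuine Chebotarev conditions the paper uses Theorem \ref{thm:greentao} (Tao--Ziegler), which allows an arbitrary positive-density target set of primes but requires all branch points to be integral, which is exactly why the paper performs the preliminary change of variable.
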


The goal of Section \ref{sec:embed} is to demonstrate the compatibility of locally cyclic realizations and embedding problems.  We provide some further nontrivial examples of locally cyclic realizations with nonsolvable Galois group equal to a central extension of $S_5$ or $PGL_2(7)$ in Theorem \ref{thm:centr_ext}.
Finally, in Section \ref{sec:app}, we also provide evidence for a general positive answer to Question \ref{ques:cyclic} (and then, a fortiori, to the weaker Question \ref{ques:loc_ab}) by showing an analog over global function fields (Theorem \ref{thm:functionfields}).


{\bf Acknowledgement}:\\
The first author was supported by the research fund from Chosun University, 2020.
The second author was supported by the National Research Foundation of Korea (Grant no. 2019R1C1C1002665).
We also thank Rachel Newton for pointing out the relevance of locally cyclic extensions to weak approximation.

%
%
%
%

\section{Prerequisites}
\label{sec:prereq}

\subsection{Function field extensions}
\label{sec:fctfds}

Let $K$ be a field. A finite Galois extension $F/K(t)$ is called {\it $K$-regular} (or simply {\it regular}, if the base field is clear), if $F\cap \overline{K} = K$. For any $t_0\in K\cup\{\infty\}$ and any place $\mathfrak{p}$ of $F$ extending the $K$-rational place $t\mapsto t_0$, we have a residue field extension $F_{t_0}/K$. This is a Galois extension, not depending on the choice of place $\mathfrak{p}$. We call it the {\it specialization} of $F/K(t)$ at $t_0$.

Now let $K$ be of characteristic zero, and let $F/K(t)$ be a $K$-regular Galois extension with group $G$. 
For each $t_i\in \overline{K}\cup\{\infty\}$, the {\it ramification index} of $F/K(t)$ at $t_i$ is the minimal positive integer $e_i$ such that $F$ embeds into $\overline{K}(((t-t_i)^{1/e_i}))$.\footnote{If $t_i=\infty$, one should replace $t-t_i$ by $1/t$.}  If the ramification index is larger than $1$, then $t_i$ is called a {\it branch point} of $F/K(t)$.
The set of branch points is always a finite set.
If $K$ is of characteristic $0$, then associated to each branch point $t_i$ (of ramification index $e_i$) is a unique conjugacy class $C_i$ of $G$, corresponding to the automorphism $(t-t_i)^{1/e_i}\mapsto \zeta (t-t_i)^{1/e_i}$ of 
$\overline{K}(((t-t_i)^{1/e_i}))$, where $\zeta$ is a primitive $e_i$-th root of unity. The ramification index then equals the order of elements in the class $C_i$. 

\subsection{Inertia and decomposition groups in specializations}
We review some known results about the behavior of inertia and decomposition groups in specializations of function field extensions.

Firstly, the following well-known theorem, cf.\ \cite[Prop.\ 4.2]{Beck}, relates the ramification in specialization of a function field extension to the inertia groups at branch points of that function field extension. 
\begin{theorem}
\label{thm:beck}
Let $K$ be a number field and $N/K(t)$ a $K$-regular Galois extension with Galois group $G$.
Then with the exception of finitely many primes, depending only on $N/K(t)$, the following holds for every prime $\mathfrak{p}$ of $K$.\\
If $t_0\in K$ is not a branch point of $N/K(t)$, then the following condition is necessary for $\mathfrak{p}$ to be ramified in the specialization $N_{t_0}/K$:
 $$\nu_i:=I_{\mathfrak{p}}(t_0, t_i)>0 \text{ for some (automatically unique, up to algebraic conjugates) branch point $t_i$.}$$
 Here $I_{\mathfrak{p}}(t_0,t_i)$ is the intersection multiplicity of $t_0$ and $t_i$ at the prime $\mathfrak{p}$.
Furthermore, $\nu_i>0$ implies that the inertia group of a prime extending $\mathfrak{p}$ in $N_{t_0}/K$ is conjugate in $G$ to $\langle\tau^{\nu_i}\rangle$, where $\tau$ is a generator of an inertia subgroup over the branch point $t\mapsto t_i$ of $N/K(t)$.
\end{theorem}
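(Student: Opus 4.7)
The plan is to pass to the completions at $\mathfrak{p}$ and carry out an explicit local computation near each branch point; the argument essentially goes back to Beckmann \cite[Prop.\ 4.2]{Beck}, and I would reproduce its structure.

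First I would fix a finite set $S$ of primes of $K$ to exclude, consisting of: primes dividing $|G|$ (so that ramification stays tame), primes of bad reduction of a chosen integral model of $N/K(t)$ over $O_{K,S}$, and primes modulo which two non-conjugate branch points collide in the splitting field of the branch divisor. Finiteness of $S$ follows because $N/K(t)$ has only finitely many branch points and admits a model which is smooth away from finitely many primes.

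Next, for $\mathfrak{p}\notin S$ and a branch point $t_i$ of ramification index $e_i$, the tame local description yields an embedding $N\hookrightarrow \overline{K_{\mathfrak{p}}}(((t-t_i)^{1/e_i}))$, and the inertia subgroup there is generated by the automorphism $\tau:(t-t_i)^{1/e_i}\mapsto \zeta_{e_i}(t-t_i)^{1/e_i}$. Assuming $\nu_i := I_{\mathfrak{p}}(t_0,t_i)>0$, write $t_0-t_i = u\pi^{\nu_i}$, with $\pi$ a uniformizer of the relevant finite extension of $K_{\mathfrak{p}}$ and $u$ a local unit. Substituting $t=t_0$ into the local embedding produces
\[
N_{t_0}\otimes_K K_{\mathfrak{p}}\;\hookrightarrow\; \overline{K_{\mathfrak{p}}}\bigl(u^{1/e_i}\pi^{\nu_i/e_i}\bigr),
\]
and the right-hand field is tamely ramified over $\overline{K_{\mathfrak{p}}}$ of degree $e_i/\gcd(e_i,\nu_i)$. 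Hence the inertia of $N_{t_0}/K$ at $\mathfrak{p}$, viewed as a subgroup of $\langle\tau\rangle$, has order $e_i/\gcd(e_i,\nu_i)$; since $\langle\tau\rangle$ is cyclic, it must equal $\langle\tau^{\nu_i}\rangle$. This gives the second (``furthermore'') assertion.

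For the necessity assertion: if $\nu_i=0$ for every branch point $t_i$, then $t_0$ reduces modulo $\mathfrak{p}$ to a non-branch point of the reduced cover, and smooth base change applied to the chosen integral model implies that $N_{t_0}/K$ is unramified at $\mathfrak{p}$. The main technical obstacle is the bookkeeping involved in the setup: one must phrase the condition $\nu_i>0$ in a manner independent of the choice of a prime of $\overline{K}$ above $\mathfrak{p}$ (equivalently, up to $\Gal(\overline{K}/K)$-conjugacy of the $t_i$), and verify that for $\mathfrak{p}\notin S$ at most one Galois orbit of branch points can be hit by $t_0$ --- this is exactly what the ``non-collision'' clause in the definition of $S$ guarantees. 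Once this is settled, the inertia calculation above reduces to a direct Puiseux expansion.
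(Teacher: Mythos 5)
The paper does not prove this statement; it cites Beckmann \cite[Prop.\ 4.2]{Beck} directly, so there is no internal proof to compare against. Your sketch correctly reconstructs Beckmann's argument: fix a finite bad set $S$ (wild primes, bad reduction, branch-point collisions), use the tame Kummer/Puiseux description near each branch point, and read off the inertia group $\langle\tau^{\nu_i}\rangle$ from the $\mathfrak{p}$-adic valuation $\nu_i$ of $t_0-t_i$ by cyclicity.
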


Regarding the definition of intersection multiplicity $I_{\mathfrak{p}}$ occurring in Theorem \ref{thm:beck}, note that in the special case $K=\mathbb{Q}$ this may be defined conveniently in the following way: Let $f(X)\in \zz[X]$ be the irreducible polynomial of $t_i$ over $\zz$ and $\tilde{f}(X,Y)$ its homogenization (with $\tilde{f}:=Y$ in the special case $t_i=\infty$). Let $t_0=\frac{a}{b}$ with $a,b\in \zz$ coprime, and let $p$ be a prime number. Then $I_p(t_0,t_i)$ is the multiplicity of $p$ in $\tilde{f}(a,b)$.

The following result clarifies the structure of decomposition groups at ramified primes in specializations of function field extensions, cf.\ \cite[Thm.\ 4.1]{KLN}. The key statement, in a nutshell, is that in addition to the structure of the inertia group as given by Theorem \ref{thm:beck}, the structure of the decomposition group at $\mathfrak{p}$ depends on the splitting behavior of $\mathfrak{p}$ in some fixed extension (the residue extension at a branch point of the given regular extension).
\begin{theorem}
\label{thm:kln}
Let $K$ be a number field and $N/K(t)$ a regular Galois extension with Galois group $G$.
Let $t\mapsto a_i \in \mathbb{P}^1(\overline{K})$ be a branch point of $N/K(t)$, and let $I_i$ and $D_i$ denote the inertia and decomposition group at $t\mapsto a_i$ in $N(a_i)/K(a_i,t)$.
Then there exists a finite set $S_0$ of primes of $K$ depending only on $N/K(t)$ such that for all primes $\mathfrak{p}\notin S_0$ and all non-branch points $a\in \mathbb{P}^1(K)$ of $N/K(t)$, the following hold:

\begin{itemize}
\item[a)] Assume $I_{\mathfrak{p}}(a,a_i)>0$, and let $D_{i,\mathfrak{p}'}$ denote the decomposition group at the (unique) prime $\mathfrak{p}'$ of $K(a_i)$ extending $\mathfrak{p} $ such that $I_{\mathfrak{p}'}(a,a_i)>0$. Then the decomposition group $D_{a,\mathfrak{p}}$ at $\mathfrak{p}$ in $N_a/K$ fulfills $\varphi(D_{a,\mathfrak{p}}) = D_{i,\mathfrak{p}'}$, where $\varphi:D_i\to D_i/I_i$ is the canonical epimorphism.
\item[b)] If additionally $I_{\mathfrak{p}}(a,a_i) = 1$, then $D_{a,\mathfrak{p}} = \varphi^{-1}(D_{i,\mathfrak{p}'})$.
\end{itemize}
\end{theorem}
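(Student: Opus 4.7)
My plan is to pass to $\mathfrak{p}$-adic completions and compare the specialization $N_a/K$ at $\mathfrak{p}$ with a local model of $N/K(t)$ at the branch place $t\mapsto a_i$. First I would note that distinct $\overline{K}$-conjugates of $a_i$ remain pairwise non-congruent modulo all but finitely many primes of $K$; so under the hypothesis $I_{\mathfrak{p}}(a,a_i)>0$, outside such an exceptional finite set there is a unique conjugate $a_i'$ of $a_i$ and a unique prime $\mathfrak{p}'\mid \mathfrak{p}$ of $K(a_i)$ with $a\equiv a_i'\pmod{\mathfrak{p}'}$; this is the $\mathfrak{p}'$ in the statement.

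Next, the $(t-a_i')$-adic completion of the $K(a_i')$-regular extension $N(a_i')/K(a_i')(t)$ is a Galois extension of $K(a_i')((t-a_i'))$ whose Galois group is $D_i$, with inertia subgroup $I_i$ and cyclic residue Galois group $D_i/I_i$ (generated, in the tame characteristic-zero setting, by the Kummer action $(t-a_i')^{1/e_i}\mapsto \zeta(t-a_i')^{1/e_i}$ for $\zeta$ a primitive $e_i$-th root of unity, paired with a lift of Frobenius on the residue extension). Outside a finite set $S_0$ of primes of $K$ depending only on $N/K(t)$, the $\mathfrak{p}$-adic completion of $N_a/K$ is obtained by substituting $t=a$ into this local model, which identifies $D_{a,\mathfrak{p}}$ with a subgroup of $D_i$; moreover $I_{a,\mathfrak{p}}=D_{a,\mathfrak{p}}\cap I_i$ by Beckmann's Theorem \ref{thm:beck}.

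For (a), under the above identification the residue extension of $N_a/K$ at $\mathfrak{p}$ coincides with the completion at $\mathfrak{p}'$ of the residue extension of the local model at $t\mapsto a_i'$, whose Galois group is by definition $D_{i,\mathfrak{p}'}$. Since this same residue group also equals $\varphi(D_{a,\mathfrak{p}})=D_{a,\mathfrak{p}}/(D_{a,\mathfrak{p}}\cap I_i)$, part (a) follows. For (b), the hypothesis $I_{\mathfrak{p}}(a,a_i)=1$ makes $a-a_i'$ a uniformizer at $\mathfrak{p}'$, so substituting $t=a$ into the totally ramified Kummer piece of the local model produces a totally ramified extension of $K_{\mathfrak{p}}$ of degree $e_i=|I_i|$. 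Hence $I_{a,\mathfrak{p}}=I_i$, and combined with (a) this gives $D_{a,\mathfrak{p}}=\varphi^{-1}(D_{i,\mathfrak{p}'})$.

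The main obstacle I expect is careful bookkeeping of the exceptional set $S_0$: it must exclude primes of bad reduction of $N/K(t)$ (so that Theorem \ref{thm:beck} applies), primes where distinct $\overline{K}$-conjugates of $a_i$ become congruent (so that $\mathfrak{p}'$ is uniquely determined), primes dividing some ramification index $e_i$ (to stay in the tame regime where the Kummer description is valid), and primes of bad reduction of the residue extension of $N(a_i')/K(a_i')(t)$ at $t\mapsto a_i'$. Each condition cuts out only finitely many primes depending solely on $N/K(t)$, so $S_0$ is finite, as required.
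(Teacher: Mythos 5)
The paper does not actually prove Theorem~\ref{thm:kln}; it imports it verbatim from K\"onig--Legrand--Neftin \cite[Thm.\ 4.1]{KLN}. So the comparison is against that reference, and your sketch does follow its broad local-model strategy: complete at the branch point, substitute $t=a$, and keep book of an exceptional set $S_0$. Your treatment of part (b) and of the conditions defining $S_0$ is sound. However, your derivation of part (a) has a genuine gap in the regime $\nu := I_\mathfrak{p}(a,a_i) > 1$.

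You assert that under the identification $D_{a,\mathfrak{p}}\le D_i$ one has $I_{a,\mathfrak{p}} = D_{a,\mathfrak{p}}\cap I_i$ ``by Beckmann,'' and you then equate $\varphi(D_{a,\mathfrak{p}}) = D_{a,\mathfrak{p}}/(D_{a,\mathfrak{p}}\cap I_i)$ with the residue Galois group of $(N_a)_\mathfrak{p}/K_\mathfrak{p}$. But Theorem~\ref{thm:beck} gives $I_{a,\mathfrak{p}}$ conjugate to $\langle\tau^\nu\rangle$, which is a \emph{proper} subgroup of $D_{a,\mathfrak{p}}\cap I_i$ whenever $\gcd(e_i,\nu)>1$: the specialized Kummer piece $K_\mathfrak{p}\bigl((\alpha(a-a_i'))^{1/e_i}\bigr)$ then acquires an unramified part of degree up to $\gcd(e_i,\nu)$ that is \emph{not} accounted for by the residue extension at the branch point. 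Concretely, if $D_i=I_i$ has order $2$ (trivial residue extension, so $D_{i,\mathfrak{p}'}=1$) and $\nu=2$, the specialization can be an unramified quadratic extension of $K_\mathfrak{p}$, so that $D_{a,\mathfrak{p}}=D_i$, $I_{a,\mathfrak{p}}=1$, yet $D_{a,\mathfrak{p}}\cap I_i = I_i\neq 1$ and the residue group has order $2$ while $\varphi(D_{a,\mathfrak{p}})$ is trivial. In this toy case the two misidentifications cancel, so the conclusion of (a) survives, but the argument as stated is broken. The fix is not to route through the residue Galois group of $(N_a)_\mathfrak{p}/K_\mathfrak{p}$ at all: instead, observe that the restriction $D_{a,\mathfrak{p}}\to \Gal(L_{\mathfrak{p}''}/K_\mathfrak{p})$, where $L/K(a_i)$ is the (fixed) residue extension at the branch point and $\mathfrak{p}''\mid\mathfrak{p}'$, is surjective with image $D_{i,\mathfrak{p}'}$, and that this restriction factors through $\varphi$ with kernel exactly $D_{a,\mathfrak{p}}\cap I_i$; this yields $\varphi(D_{a,\mathfrak{p}})=D_{i,\mathfrak{p}'}$ directly, independent of $\nu$. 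Part (b) is then unaffected, since there $\nu=1$ and your identifications are correct.
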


\begin{remark}
\label{rem:combine_beckkln}
Note that Theorem \ref{thm:beck} implies that in case b) of Theorem \ref{thm:kln}, the inertia group at $\mathfrak{p}$ in $N_{a}/K$ is all of $I_i$. Combination of Theorem \ref{thm:beck} and \ref{thm:kln}b) therefore implies that the residue degrees at $\mathfrak{p}$ in $N_{a}/K$ and at $\mathfrak{p}'$ in $N(a_i)_{a_i}/K(a_i)$ are the same.
In particular, if the latter residue degree equals $1$, then the decomposition group at $\mathfrak{p}$ in $N_a/K$ is cyclic and equal to the corresponding inertia subgroup. This implication will be of interest for us in Section \ref{sec:loccyc}.
\end{remark}

While Theorems \ref{thm:beck} and \ref{thm:kln} are primarily designed to clarify the local behavior at ramified primes in specializations, the following result by D\`ebes and Ghazi (\cite[Theorem 1.2]{DebesGhazi}) deals with unramified primes. More precisely, it ensures that prescribed unramified local behaviors can be obtained at prescribed primes, up to suitable choice of the specialization.
\begin{theorem}
\label{thm:debesghazi}
Let $K$ be a number field and $N/K(t)$ a regular Galois extension with Galois group $G$. Then there exists a finite set $S_0$ of primes of $K$ such that for every prime $p$ of $K$ not contained in $S_0$ and every conjugacy class $C$ of $G$, there exist infinitely many $t_0\in K$ such that $N_{t_0}/K$ is Galois with group $G$ and is unramified of Frobenius class $C$ at the prime $p$. More precisely, there is a positive constant $c$ depending only on $N$ and a set $\mathcal{S}$ consisting of at least $N(p)/c$ full residue classes modulo $p$ (where $N(p)$ denotes the cardinality of the residue field of $p$) such that all $t_0\in K$ with $(t_0 \textrm{ mod } p)\in \mathcal{S}$ yield the above property.\footnote{While this bound on the number of admissible residue classes is not directly stated in \cite[Theorem 1.2]{DebesGhazi}, it is evident from the proof, which reduces the problem to a count of mod-$p$ points of a certain curve, and thus to an application of the Lang-Weil bound.}
\end{theorem}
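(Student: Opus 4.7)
The plan is to reduce the cover modulo $p$ to an \'etale $G$-cover of the affine line over the residue field, count residue classes with prescribed Frobenius via Lang-Weil, and then lift to $K$ via Hilbert's irreducibility theorem. As a first step I fix an integral model: let $X \to \mathbb{P}^1_K$ be the smooth projective $G$-cover attached to $N/K(t)$, and let $S_0$ include the primes of $K$ dividing $|G|$, the primes of bad reduction of $X$, and those over which distinct branch points specialize to the same point. Outside $S_0$, the normalization of $\mathbb{P}^1_{\mathcal{O}_K\setminus S_0}$ in $N$ gives an \'etale Galois $G$-cover away from the horizontal branch divisor, and for every prime $p \notin S_0$ the mod-$p$ reduction $\bar X \to \mathbb{P}^1_{\mathbb{F}_{N(p)}}$ is again a regular Galois $G$-cover.

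Next, I count residue classes using the function-field Chebotarev density theorem. For any non-branch $\bar t_0 \in \mathbb{F}_{N(p)}$, the fibre of $\bar X$ is an \'etale $G$-torsor over $\mathbb{F}_{N(p)}$ and so carries a well-defined Frobenius conjugacy class in $G$. The Chebotarev estimate -- itself a consequence of Lang-Weil applied to a suitable twisted cover of $\bar X$ -- yields that the number of $\bar t_0 \in \mathbb{F}_{N(p)}$ whose Frobenius lies in $C$ equals $\tfrac{|C|}{|G|} N(p) + O(\sqrt{N(p)})$, with implicit constants depending only on $N/K(t)$. After enlarging $S_0$ to absorb the finitely many primes where the error dominates, I obtain a set $\mathcal{S}$ of at least $N(p)/c$ admissible residues, for some $c$ depending only on $N/K(t)$.

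Finally, I lift to $K$. Any $t_0 \in \mathcal{O}_K$ with $t_0 \bmod p \in \mathcal{S}$ is $p$-adically separated from the branch points, so $p$ is unramified in $N_{t_0}/K$ by Theorem \ref{thm:beck}, and compatibility of Frobenius with reduction of \'etale covers forces the Frobenius at $p$ to lie in $C$. To enforce $\Gal(N_{t_0}/K)=G$ I apply Hilbert's irreducibility theorem to the translated parametrization $t \mapsto p\cdot t + r$ for fixed $r \in \mathcal{S}$: the pulled-back cover is still $K$-regular with group $G$, so for all $t$ outside a thin set the specialization has full Galois group, producing infinitely many $t_0 \in K$ with the required mod-$p$ behaviour.

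The hard part is the uniform control of $S_0$. Writing down the integral model so that reduction really gives a smooth Galois cover requires either Zariski-Nagata-style purity from SGA~I for tame covers or an explicit construction from a Galois-invariant generator; likewise the Lang-Weil error depends on the genus of $\bar X$ and on $|G|$, and its uniform absorption into $S_0$ is what ultimately guarantees that the constant $c$ in the lower bound $N(p)/c$ is independent of $p$.
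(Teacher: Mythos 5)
Your proposal reconstructs essentially the argument the paper defers to: the theorem is cited from D\`ebes--Ghazi with a footnote that the quantitative bound follows from a Lang--Weil count of mod-$p$ points of a curve, which is precisely the good-reduction / function-field Chebotarev / Hilbert-irreducibility sketch you give. The details you flag as the hard part (uniform integral model, genus-dependent Lang--Weil constants absorbed into $S_0$) are indeed where the work lies, but the route matches the one indicated by the paper.
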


\subsection{Green-Tao-Ziegler's theorems on simultaneously prime values of polynomials}
Due to the above results, obtaining extensions with decomposition groups of a prescribed form among the set of specializations of a given function field extension is essentially reduced to (controlling the behavior of a finite set of ``bad" primes and) forcing the prime divisors of evaluations of certain homogeneous polynomials $\tilde{f}(X,Y)$ (namely, the irreducible polynomials of the branch points) to be simultaneously contained in certain prescribed Chebotarev sets.
This is in general a deep problem. In this section we recall two situations in which unconditional results are available. These will be used to construct locally cyclic extensions in Section \ref{sec:loccyc}.

A landmark result is Tao's and Ziegler's theorem on primes in polynomial progressions, see \cite[Theorem 1.3]{TZ}.
\begin{theorem}[Tao-Ziegler]
\label{thm:greentao}
Let $p_1(Y),\dots, p_k(Y)\in \zz[Y]$ be polynomials with $p_i(0)=0$ for all $i=1,\dots, k$. Let $\mathcal{S}$ be a positive density set of prime numbers. Then there exist infinitely many $(x_0,y_0) \in \zz^2$ such that all of values $x_0+p_i(y_0)$, $i=1,\dots, k$, are primes in $\mathcal{S}$.
\end{theorem}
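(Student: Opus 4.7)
The statement is the Tao--Ziegler polynomial extension of the Green--Tao theorem, so I will only sketch how I would assemble the ingredients. The first step is the \emph{$W$-trick}: set $W=\prod_{p\le w(N)}p$ for a slowly growing function $w(N)$, and restrict the variables $x_0,y_0$ to residue classes modulo $W$ so that each shifted value $x_0+p_i(y_0)$ is coprime to $W$. This removes the dominant local obstructions at small primes and lets one work with a renormalized von Mangoldt-type weight supported on the admissible residue classes.

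Next, I would majorize the (suitably normalized) indicator function of the primes in $\mathcal{S}$ by a \emph{pseudorandom measure} $\nu$ on $\{1,\dots,N\}$, built from a truncated Selberg sieve in the style of Green--Tao. Here one uses that $\mathcal{S}$, being of positive density inside the primes, is equidistributed in arithmetic progressions (which in our intended applications comes from a Chebotarev condition defining $\mathcal{S}$). One then verifies that $\nu$ satisfies both the linear forms condition and the correlation condition, now adapted to the polynomial configuration $(x_0+p_1(y_0),\dots,x_0+p_k(y_0))$ rather than a purely linear one.

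The heart of the argument is a \emph{polynomial transference principle}. First I would invoke the Bergelson--Leibman polynomial Szemer\'edi theorem in the dense setting: every subset of $\{1,\dots,N\}$ of positive density contains a configuration $\{x_0+p_i(y_0):i=0,\dots,k\}$, with the convention $p_0=0$. Then I would transfer this from dense sets to the sparse set of primes in $\mathcal{S}$ majorized by $\nu$, by controlling the relevant averages in the polynomial Gowers $U^d$-norms and showing that these averages differ from those in the uniform case by a negligible amount. This step requires the full Gowers-norm inverse theorem together with M\"obius--nilsequence estimates, which guarantee that primes (and their positive-density subsets) are uniform in these norms.

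The main obstacle is precisely this transference step: the polynomial analogue is considerably deeper than the linear case, needing a polynomial forms condition on $\nu$ and the full strength of the inverse theorem for the higher-step Gowers norms. Once these ingredients are in place, one obtains a lower bound of order $N^2$ on the number of pairs $(x_0,y_0)\in[1,N]^2\cap\zz^2$ for which all of $x_0+p_i(y_0)$ are primes in $\mathcal{S}$, which in particular produces infinitely many such pairs as $N\to\infty$.
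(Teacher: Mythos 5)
This theorem is not proven in the paper at all: it is quoted verbatim from Tao and Ziegler (\cite[Theorem 1.3]{TZ}) and used as a black box. So there is no ``paper's own proof'' to compare against; your text is an attempt to summarize the Tao--Ziegler argument itself.

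Your overall plan --- the $W$-trick, a Green--Tao-style pseudorandom majorant from a truncated Selberg sieve, and transference against the Bergelson--Leibman polynomial Szemer\'edi theorem --- does reflect the architecture of the Tao--Ziegler proof. But one of your central claims is wrong in a way that matters: you assert that the transference ``requires the full Gowers-norm inverse theorem together with M\"obius--nilsequence estimates.'' That machinery is what underpins the Green--Tao asymptotic count for linear equations in primes (the paper's Theorem~\ref{thm:taoziegler}, from \cite{GT}), which is a matching upper and lower bound. The Tao--Ziegler polynomial-progressions result is a positivity statement and was proven \emph{without} any inverse theorem for Gowers norms (the full $U^{s+1}[N]$ inverse theorem was not even available until 2012, well after \cite{TZ} appeared). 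Instead, Tao--Ziegler introduce local (averaged) Gowers norms adapted to polynomial forms, a polynomial forms/correlation condition on the majorant, and a correspondingly adapted generalized von Neumann theorem driven by PET induction, before transferring to Bergelson--Leibman. Presenting the inverse theorem as a needed ingredient would turn the theorem into a conditional one, which it is not. A second, smaller point: the equidistribution of $\mathcal{S}$ in progressions that you invoke is neither assumed nor needed. After the $W$-trick one only needs $\mathcal{S}$ to retain positive relative density on \emph{some} admissible residue class modulo $W$, which follows from pigeonhole; the Chebotarev description of $\mathcal{S}$ in the paper's applications is a convenience, not a hypothesis of Theorem~\ref{thm:greentao}.
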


Note that the case $p_i(Y) = i\cdot Y$ leads to the famous theorem asserting that the primes contain arbitrarily long arithmetic progressions.

A related result is Green-Tao's theorem about simultaneously prime values of affine-linear forms. We give a special case which is sufficient for our purposes. The main theorem of \cite{GT} gives a much stronger result; our application follows in particular due to \cite[Corollary 1.9]{GT}.
\begin{theorem}[Green-Tao]
\label{thm:taoziegler}
Let $f_1(X,Y), \dots, f_k(X,Y) \in \zz[X,Y]$ be affine linear forms in two variables such that 
\begin{itemize}
\item[a)] The product $\prod_{i=1}^r f_i$ has no fixed prime divisor, i.e., for each prime $p$ there exists $(x_0,y_0)\in \mathbb{Z}^2$ such that none of $f_1(x_0,y_0),\dots, f_k(x_0,y_0)$ is divisible by $p$.
\item[b)] The $f_i$ are pairwise affinely independent (i.e., if $a,b,c\in \zz$ such that $af_i + bf_j +c = 0$, then $a=b=c=0$).
\end{itemize}
Then there are infinitely many $x_0,y_0\in \zz$ such that $f_1(x_0,y_0), \dots, f_k(x_0,y_0)$ are simultaneously prime.\footnote{In order to avoid an obstruction coming from the archimedean prime, we count negatives of prime numbers as prime here. This is consistent with the assertion of \cite[Corollary 1.9]{GT} via changing the sign of some of the $f_i$ if necessary.} 
\end{theorem}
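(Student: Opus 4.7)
The plan is to invoke \cite[Corollary 1.9]{GT} directly: the stated theorem is essentially the two-variable case of the Green-Tao theorem on linear equations in primes. First I would verify that the two hypotheses (a) and (b) precisely match the Green-Tao framework. Pairwise affine independence (b) guarantees that the $k$-tuple $(f_1,\ldots,f_k)$ forms a system of finite complexity in the sense of Green-Tao-Ziegler (so that the requisite Gowers-norm estimates apply); absence of a fixed prime divisor (a) guarantees that each local factor of the Hardy-Littlewood singular series is strictly positive. Since the $f_i$ are genuinely two-variable affine forms, the archimedean (density) factor is positive as well after one restricts attention to an appropriate wedge where each $f_i$ has a consistent sign.

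Granting this, the Green-Tao counting theorem applied to the box $(x,y)\in [-N,N]^2$ produces an asymptotic of the shape
\[ \sum_{(x,y)\in [-N,N]^2} \prod_{i=1}^k \Lambda\bigl(|f_i(x,y)|\bigr) \;=\; \mathfrak{S}\cdot (2N)^2/(\log N)^k \cdot \bigl(1+o(1)\bigr), \]
with singular series $\mathfrak{S}>0$ by the preceding paragraph. The right-hand side tends to infinity with $N$, so infinitely many $(x_0,y_0)\in\zz^2$ must contribute a nonzero term; for each such pair, all the values $f_i(x_0,y_0)$ are prime powers, and a trivial $\log N$ argument rules out higher prime powers for all but $o(N^2/(\log N)^k)$ of them. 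Hence there are infinitely many $(x_0,y_0)$ for which the $f_i(x_0,y_0)$ are all prime, up to sign as per the footnote convention.

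The main obstacle is of course located deep inside the proof of \cite{GT} itself, which rests on the M\"obius-nilsequences theorem of Green-Tao-Ziegler, the inverse theorem for Gowers norms, and a transference-principle argument combined with the generalized von Neumann theorem. In the present setup, however, all of these are cited as black boxes; the effective content of our proof is just to check that (a) and (b) fit the Green-Tao framework, which is essentially immediate from the definitions, plus the standard sieve-theoretic passage from the weighted asymptotic to an unweighted existence statement.
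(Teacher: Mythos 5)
Your proposal is correct and takes essentially the same approach as the paper: both simply invoke \cite[Corollary 1.9]{GT} directly after matching hypotheses (a) and (b) to the finite-complexity and nonvanishing-singular-series conditions of the Green--Tao framework, and handle the archimedean sign issue by restricting to a suitable cone (the paper records this via the footnote and Remark \ref{rmk:gt}). The paper provides no further proof beyond this citation, so your additional sketch of the counting asymptotic and the prime-power removal is extra detail beyond what the authors wrote, but it is consistent with the same black-boxed argument.
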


\begin{remark}
\label{rmk:gt}
in fact, \cite[Corollary 1.9]{GT} allows to pick the values $(x_0, y_0)$ in $\mathbb{Z}^2 \cap K$ for any prescribed open convex cone $K\subset \mathbb{R}^2$. In particular, the assertion of Theorem \ref{thm:taoziegler} remains valid even with the additional condition that the quotient $x_0/y_0$ should lie in some prescribed real interval of positive length.
\end{remark}

\subsection{Embedding problems}
\label{sec:embed_basics}
In this section, we recall some basic notions from the theory of embedding problems of Galois extensions, which will be relevant in Section \ref{sec:embed}. General studies on embedding problems can be found in Chapter III $\S 5$ and Chapter IV $\S 6$ of \cite{NSW}.

Let $k$ be a number field 
and $G_k$ the absolute Galois group of $k$. Let $K/k$ be a finite Galois extension with the Galois group $G$. For a group extension $1 \arrow{e} A \arrow{e} E \arrow{e,l} {j} G \arrow{e} 1$ of finite groups, the embedding problem $(K/k, \epsilon)$ is defined by the diagram

\[
  \begin{diagram}\dgARROWLENGTH=1.6em
    \node{}
    \node{}
    \node{}
    \node{G_k} \arrow{s,r} {\varphi}
    \node{}\\
    \node{(\epsilon) : 1} \arrow{e}
    \node{A} \arrow{e}
    \node{E} \arrow{e,l} {j}
    \node{G} \arrow{e}
    \node{1}
  \end{diagram}
  \]
where $\varphi$ is the natural restriction map. A continuous homomorphism $\psi$ of $G_k$ to $E$ is called a \textit{solution} of $(K/k, \epsilon)$ if it satisfies the condition $j \circ \psi = \varphi$. When $(K/k, \epsilon)$ has a solution, we call $(K/k, \epsilon)$ is \textit{solvable}. A solution $\psi$ is called a {\it proper solution} if it is surjective. A field $N$ is called a \textit{solution field} (resp. a \textit{proper solution field}) of $(K/k, \epsilon)$ if $N$ is the fixed field of the kernel of a solution (resp. a proper solution). The existence of a proper solution of $(K/k,\epsilon)$ is equivalent to the existence of a Galois extension $M$ of $k$ containing $K$ such that the canonical sequence $1 \to \Gal(M/K) \to \Gal(M/k) \to \Gal(K/k) \to 1$ coincides with $\epsilon$. The embedding problem is called {\it central} if $A\le Z(E)$.
For each prime $\p$ of $k$, we denote by $k_{\p}$ (resp. $K_{\p'})$ the completion of $k$ (resp.\ $K$) by $\p$ (resp.\ an extension $\p'$ of $\p$ to $K$).

Then the {\it local embedding problem} $(K_{\p'}/k_{\p},\epsilon_{\p})$ of $(K/k,\epsilon)$ is defined by the diagram
\[
  \begin{diagram}\dgARROWLENGTH=1.6em
    \node{}
    \node{}
    \node{}
    \node{G_{k_{\p}}} \arrow{s,r} {\varphi|_{\p}}
    \node{}\\
    \node{(\epsilon) : 1} \arrow{e}
    \node{A} \arrow{e}
    \node{E_{\p}} \arrow{e,l} {j_{E_{\p}}}
    \node{G_{\p}} \arrow{e}
    \node{1}
  \end{diagram}
  \]
where $G_{\p}$ is the Galois group of $K_{\p'} / k_{\p}$, which is isomorphic to the decomposition group of $\p$ in $K / k$, $G_{k_{\p}}$ is the absolute Galois group of $k_{\p}$, and $E_{\p}$ is the inverse of $G_{\p}$ by $j$. In the same manner as the case of $(K/k,\epsilon)$, solutions, solution fields
etc.\ are defined for $(K_{\p'}/k_{\p},\epsilon_{\p})$. 

\section{Locally abelian extensions of $\mathbb{Q}$}
\label{sec:locab}
This section is concerned with the construction of locally abelian extensions with prescribed Galois group. We begin with an obvious observation which justifies restricting our attention to $K=\mathbb{Q}$ for most of the following. 
\begin{lemma}
Let $K$ be a number field. Assume that there exist infinitely many linearly disjoint locally abelian (resp., locally cyclic) $G$-extensions over $K$. Then the same holds over any number field $L\supseteq K$.
\end{lemma}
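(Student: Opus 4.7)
The natural candidate is to form $F_i L$ for each $F_i$ in the given family, to check that (for all but finitely many $i$) $F_i L / L$ is a locally abelian (resp.\ locally cyclic) $G$-extension, and then to extract an infinite subfamily which is pairwise linearly disjoint over $L$.

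Let $\widetilde L / K$ be the Galois closure of $L / K$. Since $\widetilde L$ has only finitely many subfields and, by pairwise linear disjointness over $K$, each non-trivial subfield of $\widetilde L$ can lie in at most one $F_i$, all but finitely many $F_i$ satisfy $F_i \cap \widetilde L = K$. For these, $F_i$ and $L$ are linearly disjoint over $K$, and $\Gal(F_i L / L) \cong \Gal(F_i / K) = G$ via restriction; I restrict to this subfamily. The local condition is then immediate: for any prime $\mathfrak{P}$ of $L$ above $\mathfrak{p}$ of $K$, the completion $(F_i L)_{\mathfrak{P}}$ equals the local compositum $L_{\mathfrak{P}} \cdot (F_i)_{\mathfrak{p}'}$ for some prime $\mathfrak{p}'$ of $F_i$ above $\mathfrak{p}$, so its Galois group over $L_{\mathfrak{P}}$ embeds into the abelian (resp.\ cyclic) group $\Gal((F_i)_{\mathfrak{p}'} / K_{\mathfrak{p}})$ and hence is itself abelian (resp.\ cyclic).

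The main technical step is pairwise linear disjointness over $L$. I plan to build the graph on the remaining family with an edge between $F_i$ and $F_j$ whenever $F_i L$ and $F_j L$ are not linearly disjoint over $L$, show this graph has finite degree at every vertex, and then extract an infinite independent set by a greedy construction. Bounding the degree at a fixed $F_{i_0}$ is the heart of the matter: any non-trivial intersection $F_{i_0} L \cap F_j L$ is Galois over $L$ (as the intersection of two Galois extensions of $L$), so contains a subfield of the form $L \cdot E$ with $E = F_{i_0}^H$ for some proper $H \trianglelefteq G$. A degree count using $E \cap F_j \subseteq F_{i_0} \cap F_j = K$ gives $[E:K] \le [L:K]$, bounding $E$ among finitely many choices. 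For each such $E$, I will analyse the inclusion $E \subseteq F_j L \subseteq F_j \widetilde L$ via Goursat's lemma inside the Galois extension $F_j \widetilde L / K$ with group $G \times \widetilde G$, and argue---after pigeonholing on the finitely many possible ``embedding types'' of $E$ (encoded by a normal subgroup of $\widetilde G$ with quotient $\cong G/H$ together with a twisting homomorphism into an appropriate centre)---that infinitely many bad $j$ would force infinitely many $F_j$'s to share a single specific non-trivial Galois subfield of $\overline{K}$, contradicting pairwise linear disjointness over $K$. The main obstacle I anticipate is precisely this Goursat bookkeeping: turning each embedding type into a concrete non-trivial number field that every bad $F_j$ must contain.
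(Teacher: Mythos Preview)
Your approach is correct. The paper itself gives no proof at all; it introduces the lemma as ``an obvious observation'' and moves on. So you are not diverging from the paper's argument so much as supplying one.

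Your reduction (pass to the Galois closure $\widetilde L$, discard the finitely many $F_i$ meeting $\widetilde L$ nontrivially, check the local condition by restriction of decomposition groups) is the natural one, and your finite-degree/greedy extraction scheme for linear disjointness over $L$ is sound. The degree bound $[E:K]\le [L:K]$ via $E\cap F_j=K$ and $E\subseteq F_jL$ is correct.

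The only place you are making your life harder than necessary is the final Goursat bookkeeping. You can avoid it entirely. With $E=F_{i_0}^H$ fixed and nontrivial, set $E':=E\widetilde L$; this is a \emph{fixed} number field depending only on $i_0$ and $L$. From $E\subseteq F_jL\subseteq F_j\widetilde L$ you get $E'\subseteq F_j\widetilde L$, and then
\[
[E'\cap F_j:K]\;=\;\frac{[E':K]}{[E'F_j:F_j]}\;\ge\;\frac{[E:K]\,[\widetilde L:K]}{[F_j\widetilde L:F_j]}\;=\;[E:K]\;>\;1,
\]
using $E\cap\widetilde L\subseteq F_{i_0}\cap\widetilde L=K$ for the numerator and $E'F_j\subseteq F_j\widetilde L$, $F_j\cap\widetilde L=K$ for the denominator. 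Thus every bad $j$ produces a nontrivial subfield $E'\cap F_j$ of the fixed field $E'$; since $E'$ has only finitely many subfields, pigeonhole forces infinitely many $F_j$ to share one such nontrivial subfield, contradicting pairwise linear disjointness over $K$. This replaces the ``embedding type'' classification you were anticipating and makes the whole argument a few lines.
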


\subsection{General criteria}
Below, we will attack the problem of finding locally abelian
extensions of $\mathbb{Q}$ with prescribed Galois group by specialization of suitable function field extensions. We therefore need to control the local behavior of such specializations.

A convenient way to obtain locally abelian extensions is given by the following elementary observation.
\begin{remark}
\label{rem:order2inertia}
Every tamely ramified extension of $\qq$ in which all ramification indices are $\le 2$ is locally abelian. Indeed, the decomposition groups at ramified primes are then (order $2$) central extensions of cyclic groups, i.e., abelian. Using results from the literature about Galois realizations with inertia groups of order $2$, this shows for example that $S_n$, $A_5$, $PSL_2(7)$, $PSL_2(11)$, $M_{11}$ and several more almost simple groups are quotients of $\Gal(\qq^{loc-ab}/\qq)$. Compare \cite{Ked} for the case $S_n$ and \cite[Proposition 4.5]{KNS} for the other groups mentioned above.
\end{remark}

\begin{prop}
\label{thm:inertia2}
Let $E/\qq(t)$ be a $\qq$-regular $G$-extension such that all non-trivial inertia groups at finite branch points of $E/\mathbb{Q}(t)$ are of order $2$. 
Assume that there exists $t_0\in \qq$, not a branch point of $E/\qq(t)$, such that the specialization $E_{t_0}/\qq$ is locally abelian. 
Then there exist infinitely many pairwise linearly disjoint locally abelian $G$-extensions $F/\mathbb{Q}$. In particular, Question \ref{ques:loc_ab} has a positive answer for all groups $G^d$, $d\in\mathbb{N}$.
Moreover, one may choose the extensions $F/\mathbb{Q}$ such that, for each ramified prime $p$, either the completion $F_p/\mathbb{Q}_p$ equals $(E_{t_0})_p/\mathbb{Q}_p$ or $p$ is tamely ramified of ramification index $2$.
\end{prop}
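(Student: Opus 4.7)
The strategy is to produce the desired $G$-extensions as specializations $E_{t_0'}/\mathbb{Q}$ at integer parameters $t_0' \in \mathbb{Z}$, chosen to preserve the locally abelian property by separately controlling a finite set of ``bad'' primes and then letting Theorem \ref{thm:beck} handle all others. Let $S_0$ denote the union of the exceptional sets from Theorems \ref{thm:beck} and \ref{thm:kln} applied to $E/\mathbb{Q}(t)$, together with the primes dividing $|G|$ and the prime $2$. At each $p \in S_0$, a standard Krasner-type continuity argument (applied to a defining polynomial $F(t,X)$ of $E/\mathbb{Q}(t)$) yields a $p$-adic neighborhood $U_p \subset \mathbb{Q}_p$ of $t_0$ such that for every non-branch $t' \in U_p$, the local extension $(E_{t'})_p/\mathbb{Q}_p$ coincides with $(E_{t_0})_p/\mathbb{Q}_p$, which is abelian by hypothesis. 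By weak approximation together with Hilbert's irreducibility theorem, there are infinitely many integer parameters $t_0' \in \bigcap_{p \in S_0} U_p$ with $\mathrm{Gal}(E_{t_0'}/\mathbb{Q}) \cong G$; restricting to integers guarantees that the branch point at $\infty$ (if present) contributes no ramification.

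For each ramified prime $p \notin S_0$ of such an $E_{t_0'}/\mathbb{Q}$, Theorem \ref{thm:beck} yields an inertia subgroup of the form $\langle \tau^{\nu_i}\rangle$, where $\tau$ generates an order-$2$ inertia at a finite branch point; hence the ramification index is $1$ or $2$. Since $p \neq 2$, this ramification is tame, and in the resulting decomposition group the Frobenius acts on the order-$2$ inertia by the $|\mathbb{F}_p|$-th power, which is trivial for odd $|\mathbb{F}_p|$; the decomposition group is therefore abelian (indeed, a direct product of the inertia with the cyclic quotient). Combined with the abelian behavior at the primes in $S_0$, this shows $E_{t_0'}/\mathbb{Q}$ is locally abelian, and simultaneously establishes the ``moreover'' dichotomy: at primes in $S_0$ the completion equals $(E_{t_0})_p/\mathbb{Q}_p$, while at primes outside $S_0$ the ramification is tame of index at most $2$.

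To obtain infinitely many pairwise linearly disjoint such $F_n = E_{t_n}/\mathbb{Q}$, run the construction inductively. Having produced $F_1, \dots, F_n$, enlarge $S_0$ to include the ramified primes of the compositum $F_1 \cdots F_n$, and in the Krasner neighborhoods at these new primes impose that $t_{n+1}$ be a $p$-adic unit avoiding the reduction of the branch locus mod $p$; Theorem \ref{thm:debesghazi} provides plenty of such admissible residue classes. This forces $F_{n+1}$ to be unramified at every prime ramifying in $F_1 \cdots F_n$; meanwhile, standard estimates on the prime divisors of the evaluation of a branch-point polynomial at $t_{n+1}$ show that $F_{n+1}$ itself picks up a new ramified prime, which makes $F_{n+1}$ linearly disjoint from $F_1 \cdots F_n$ over $\mathbb{Q}$. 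The assertion for $G^d$ follows by taking the compositum of $d$ such pairwise linearly disjoint $G$-extensions, which is again locally abelian (the compositum of abelian local extensions is abelian, as noted in the introduction) with Galois group $G^d$.

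The step I expect to be trickiest is the inductive linear disjointness argument, where one must simultaneously satisfy the $p$-adic conditions at the old bad primes (to preserve the local profile), the Hilbertian condition (to preserve the Galois group), the unramifiedness condition at ramified primes of previously constructed $F_i$, and the existence of a new ramified prime in $F_{n+1}$. Each of these is individually standard, and their compatibility is exactly what Theorem \ref{thm:debesghazi} is designed to guarantee; nevertheless, assembling them cleanly — in particular ensuring that generic admissible $t_{n+1}$ automatically has some large prime divisor of a branch-point evaluation appearing to odd intersection multiplicity — is the main piece of bookkeeping required.
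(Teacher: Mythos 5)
Your core construction is exactly the paper's: control the bad primes $S_0 \cup \{2\}$ by Krasner-type $p$-adic approximation to $t_0$, handle the remaining ramified primes via Theorem \ref{thm:beck} (giving tame inertia of order at most $2$, hence abelian decomposition by the argument of Remark \ref{rem:order2inertia}), restrict to integer specializations to neutralize the branch point at infinity, and apply Hilbert irreducibility. Two small remarks on that part: including the exceptional set of Theorem \ref{thm:kln} and the primes dividing $|G|$ in $S_0$ is harmless but unnecessary, and the parenthetical claim that the decomposition group at a tame order-$2$ inertia prime is ``a direct product of the inertia with the cyclic quotient'' overstates slightly --- a central extension of a cyclic group by $C_2$ need not split (it can be $C_{2f}$) --- though the key conclusion (abelian) is correct.

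Where your proposal diverges from the paper is the linear disjointness step, and this is where there is a genuine gap. You argue that by additionally forcing $t_{n+1}$ to reduce away from the branch locus modulo each prime ramifying in $F_1\cdots F_n$, the new field $F_{n+1}$ is ``unramified at every prime ramifying in $F_1\cdots F_n$,'' whence $F_{n+1}\cap F_1\cdots F_n=\mathbb{Q}$. But this cannot hold at primes $p$ lying in the \emph{original} exceptional set $S_0$ at which $E_{t_0}/\mathbb{Q}$ is ramified: there your Krasner condition pins down $(F_i)_p=(E_{t_0})_p$ for \emph{all} $i$, including $i=n+1$, so all the $F_i$ share the same (possibly ramified) local behavior at such $p$, and the unramifiedness-based disjointness argument breaks down. (You also cannot simply drop the Krasner condition at those $p$, since then you lose the abelianness guarantee there.) The paper sidesteps this entirely by citing \cite[Proposition 2.1]{PV05}, which asserts that an $S$-adically open specialization set is compatible with Hilbert irreducibility over any fixed finite extension, yielding infinitely many pairwise linearly disjoint specializations with group $G$ directly; the fix for your approach would be to invoke Hilbert irreducibility relative to the base field $F_1\cdots F_n$ rather than trying to force disjointness through ramification.
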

\begin{proof}
Up to linear transformation in $t$, we may assume $t_0= 0$.
We need to find $t_1\in \mathbb{Q}$, not a branch point of $E/\mathbb{Q}(t)$, such that $\Gal(E_{t_1}/\mathbb{Q})=G$ and all decomposition groups at ramified primes in $E_{t_1}/\mathbb{Q}$ are abelian.
We now choose $t_1\in \zz$.\footnote{This ensures that $t_1$ cannot meet infinity (if that is a branch point) modulo any prime, so that this branch point may be ignored when applying Theorem \ref{thm:beck}.} Then from Theorem \ref{thm:beck}, it follows that all ramified primes in $E_{t_1}/\mathbb{Q}$ outside some finite set $S_0$ (depending only on $E/\mathbb{Q}(t)$) have inertia group of order $2$ - and in particular, are tamely ramified as long as $p\notin S_0\cup \{2\}$. From Remark \ref{rem:order2inertia}, it then follows that all these primes have abelian decomposition group in $E_{t_1}/\mathbb{Q}$.

Now let $S$ be any finite set of prime numbers containing $S_0\cup\{2\}$. Choose $t_1\in \zz$ sufficiently close $p$-adically to $t_0=0$ for all $p\in S$. Then Krasner's lemma yields that the completions of $E_{t_1}/\qq$ and of $E_{t_0}/\mathbb{Q}$ at $p$ are equal, and in particular abelian, for all $p\in S$.\\
Note that these mod-$p$ conditions yield an $S$-adically open set of admissible values $t_1\in \mathbb{Q}$. Due to, e.g., \cite[Proposition 2.1]{PV05}, 
such a set is compatible with Hilbert irreducibility, i.e., among the set of all such specializations $E_{t_1}/\mathbb{Q}$, there are infinitely many pairwise linearly disjoint ones with Galois group $G$, completing the proof.
\end{proof}

The following theorem relaxes the assumptions of Proposition \ref{thm:inertia2}, allowing applications for quite a number of almost-simple groups. Its argument is a variant of Theorem 4.4 in \cite{KNS}.
\begin{theorem}
\label{thm:crit_locab}
Let $E/\qq(t)$ be a $\qq$-regular $G$-extension with branch points $t_1,\dots, t_r$, and let $I_i\le G$ (resp., $e_i\in \nn$) be the inertia group (resp., the ramification index) of $E/\qq(t)$ at $t_i$. Assume that all of the following are fulfilled:
\begin{itemize}
\item[a)] $e_3 = \dots = e_r = 2$.
\item[b)] $e_1\notin \{2,e_2\}$; and either $\gcd(e_1,e_2) = 1$ or $G$ is perfect.\footnote{I.e., $G$ equals its derived subgroup $G'$.}
\item[c)] $N_G(I_1)/I_1$ is abelian (where $N_G(U)$ denotes the normalizer in $G$ of a subgroup $U\le G$), and the extension $1\to I_1\to N_G(I_1)\to N_G(I_1)/I_1 \to 1$ splits.
\end{itemize}
Then there exists a $\mathbb{Q}$-regular $G$-extension $\tilde{E}/\mathbb{Q}(t)$ unramified at $0$, fulfilling the assumptions of Proposition \ref{thm:inertia2}, and such that $\tilde{E}_{0}\subseteq E_{t_1}$ and ${\Gal}(\tilde{E}_0/\mathbb{Q})$ is isomorphic to a subgroup of $N_G(I_1)/I_1$.
%
%
\end{theorem}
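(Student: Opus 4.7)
The plan is to construct $\tilde{E}$ from $E$ by a rational pullback that kills the ramification at $t_1$, followed by a further modification replacing the $t_2$-ramification by involutive ramification while preserving $\mathbb{Q}$-regularity and the Galois group $G$. After a linear fractional change of the base variable $t$, I may assume $t_1 = 0$.

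For the first step, I consider the pullback of $E/\mathbb{Q}(t)$ along the $\mathbb{Q}$-rational map $u \mapsto u^{e_1}$ (and then rename the new base variable back to $t$). By the standard pullback formula for ramification indices, the resulting $\mathbb{Q}$-regular $G$-extension $E^{(1)}/\mathbb{Q}(t)$ is unramified at $t = 0$ (since the multiplicity of $0$ in the pullback map matches the inertia order at $t_1$), and its specialization at $t = 0$ equals the residue extension $E_{t_1}/\mathbb{Q}$, whose Galois group is a subgroup of $N_G(I_1)/I_1$ by the standard theory of ramification in function-field extensions. This already yields the conditions $\tilde{E}_0 \subseteq E_{t_1}$ and $\Gal(\tilde{E}_0/\mathbb{Q}) \le N_G(I_1)/I_1$.

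For the second step, one must eliminate the non-involutive inertia which survives at the preimages of $t_2$ (still of order $e_2$). Here condition (b) is the key group-theoretic ingredient: using the Riemann relation $\tau_1 \cdots \tau_r = 1$, the image of $\tau_1\tau_2$ in the quotient $G/H$ (where $H$ is the normal closure of the involutions $\tau_3, \dots, \tau_r$) has order dividing $\gcd(e_1, e_2)$. This is $1$ when $\gcd(e_1, e_2) = 1$; when $G$ is perfect, $G/H$ is a cyclic quotient of a perfect group, hence trivial. Either way $H = G$, so $G$ is normally generated by involutions. Combined with the Riemann existence theorem over $\overline{\mathbb{Q}}$, this permits the construction of a $G$-cover with only involutive inertia at its finite branch points and whose specialization at $0$ still lies inside $E_{t_1}$. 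The split hypothesis in condition (c) then ensures that this cover descends to a $\mathbb{Q}$-regular extension, delivering the desired $\tilde{E}/\mathbb{Q}(t)$.

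Finally, with $\tilde{E}$ so constructed, the hypotheses of Proposition~\ref{thm:inertia2} hold at $t_0 = 0$: the cover has all finite nontrivial inertia of order $2$ and is unramified at $0$, with the specialization $\tilde{E}_0 \subseteq E_{t_1}$ being a subfield of an abelian extension of $\mathbb{Q}$ (since $N_G(I_1)/I_1$ is abelian), hence locally abelian. The main technical obstacle is the existence of the intermediate cover with all involutive finite inertia and the specified specialization at $0$: this is where both conditions (b) (forcing $G$ to be normally generated by involutions) and (c) (providing the split structure needed for descent to $\mathbb{Q}$) are essential.
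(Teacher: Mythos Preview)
Your second step is a genuine gap. Invoking the Riemann existence theorem over $\overline{\mathbb{Q}}$ produces $G$-covers with any prescribed ramification type, but such covers need not descend to $\mathbb{Q}$, and even when they do, there is no mechanism forcing the specialization at $0$ to land inside the specific number field $E_{t_1}$. Condition (c) does not supply such a mechanism: the splitting of $1 \to I_1 \to N_G(I_1) \to N_G(I_1)/I_1 \to 1$ is a statement about the local structure of $E$ at the branch point $t_1$, not a descent datum for an unrelated cover built abstractly from a generating system of involutions. Your reading of condition (b) --- that $G$ is normally generated by the involutions $\tau_3,\dots,\tau_r$ --- is a correct group-theoretic deduction, but it is not how (b) enters the argument. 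There is already a problem in your first step as well: pulling back along the naive map $t = u^{e_1}$ need not give $E^{(1)}_0 = E_{t_1}$. The completion of $E$ at $t_1$ has the form $K((\sqrt[e_1]{\alpha t}))$ for some $\alpha \in K$, and after your pullback the residue field at $u=0$ becomes $K(\sqrt[e_1]{\alpha})$, which may strictly contain $K$ and need not be abelian over $\mathbb{Q}$.

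The paper avoids any ``second step'' via an observation you overlooked: Proposition~\ref{thm:inertia2} constrains inertia only at \emph{finite} branch points. One first arranges $t_1=0$ and $t_2=\infty$ (both can be taken rational), and then performs a \emph{single} pullback along $u = \sqrt[e_1]{\alpha t}$ for a carefully chosen $\alpha \in \mathbb{Q}$. This map is totally ramified precisely over $0$ and $\infty$; Abhyankar's lemma kills the inertia at $0$, and whatever inertia survives over $t_2$ now sits at the infinite place of $\mathbb{Q}(u)$, where it is harmless for Proposition~\ref{thm:inertia2}. All finite branch points of $E(u)/\mathbb{Q}(u)$ lie over $t_3,\dots,t_r$ and hence have order-$2$ inertia. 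In this setup condition (b) is used to ensure that $\mathbb{Q}(u)/\mathbb{Q}(t)$ and $E/\mathbb{Q}(t)$ are geometrically linearly disjoint, so that the pullback still has group $G$; and condition (c) is precisely what allows one to choose $\alpha \in \mathbb{Q}$ so that $\mathbb{Q}((\sqrt[e_1]{\alpha t}))$ embeds into the completion of $E$ at $t_1$, which then forces the residue extension at $u=0$ to remain inside $E_{t_1}$.
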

\begin{proof} We reduce to Proposition \ref{thm:inertia2} by considering a suitable pullback $E(u)/\qq(u)$, where $\qq(u)/\qq(t)$ is an extension of rational function fields.
Since by assumptions a) and b), $t_1$ is the only branch point of ramification index $e_1$, it follows from the branch cycle lemma (e.g., \cite[Lemma 2.8]{Voe})
that $t_1$ is rational, and we may assume $t_1=0$ without loss. 

We next claim that we may assume $t_2$ to be rational as well. If $e_2>2$, this follows again from the branch cycle lemma, whereas if $e_2=2$, we may simply replace the tuple $(t_1,t_2,\dots,t_r)$ of branch points by $(t_1', \dots, t_{r+1}'):= (t_1, t_2', t_2,\dots, t_r)$, where $t_2'$ is any unramified rational point; we may then work with this new tuple, and have artificially created $e_2=1$. We have thus shown the claim, and may assume without loss of generality that $t_2=\infty$. Consider now the completion at $E/\qq(t)$ at $t\mapsto 0$. This is of the form $K((\sqrt[e_1]{\alpha t}))$, where $K$ is the residue field of $E$ at $t\mapsto 0$ and $\alpha\in K$. Since the extension $1\to I_1\to N_G(I_1)\to N_G(I_1)/I_1 \to 1$ splits, $K((\sqrt[e_1]{\alpha t}))/\mathbb{Q}((t))$ has a totally tamely ramified subextension $M/\mathbb{Q}((t))$ of degree $e_1$. Such an extension is known to always be of the form $M=\mathbb{Q}((\sqrt[e_1]{\pi}))$ 
for some uniformizer $\pi$ of $\mathbb{Q}[[t]]$ (e.g., \cite{Lang}, pp. 52-53), and thus even of the form $M=\mathbb{Q}((\sqrt[e_1]{\alpha' t}))$ for some $\alpha'\in \mathbb{Q}$. We may therefore assume $\alpha = \alpha'\in \qq$. Furthermore the residue extension $K/\qq$ has Galois group $D_1/I_1 \le N_G(I_1)/I_1$ (where $D_1$ denotes the decomposition group at $t_1$), i.e., is abelian.

Now let $u = \sqrt[e_1]{\alpha t}$. Consider the extension $E(u)/\qq(u)$. The extension $\qq(u)/\qq(t)$ is of degree $e_1$, totally ramified at $0$ and $\infty$. By the second part of assumption b), the extensions $E/\qq(t)$ and $\qq(u)/\qq(t)$ are linearly disjoint, even after base change to $\overline{\qq}$. 
(In the case of a perfect group $G$, this is due to the fact that the extension $\qq(u)/\qq(t)$ is solvable, whereas $G$ has no solvable quotient.) 
Therefore $E(u)/\qq(u)$ is a $\qq$-regular $G$-extension. Furthermore, it is unramified at $u\mapsto 0$ due to Abhyankar's lemma, and all non-trivial inertia groups at finite places have order $2$. 

We will now show that residue extension $E(u)_0/\qq$ is a subextension of $K/\mathbb{Q}$. This suffices to prove the assertion, since ${\Gal}(E(u)_0/\mathbb{Q})$ is then a quotient of the abelian group ${\Gal}(K/\mathbb{Q}) \cong D_1/I_1 \le N_G(I_1)/I_1$, and any quotient of an abelian group also occurs as a subgroup.

But note that $E(u)_0$ is certainly contained in the compositum of the completions of $E/\qq(t)$ and $\qq(u)(\zeta_{e_1})/\qq(t)$ at $t\mapsto 0$. The latter completion is $\qq(\zeta_{e_1})((u))/\qq((t))$, and since $\zeta_{e_1}\in K$ (see, e.g., \cite[Lemma 2.3]{KLN}),
 it follows that the compositum of the two completions is still $K((u))/\qq((t))$, meaning that $E(u)_0$ cannot be larger than $K$. This completes the proof
\end{proof}

\subsection{Sample applications}
Below, we give several applications of Proposition \ref{thm:inertia2} and Theorem \ref{thm:crit_locab} to produce locally abelian extensions with almost-simple Galois groups.
\begin{kor}
\label{lem:appl_locab}
Let $G$ be  one of the following:
\begin{itemize}
\item[a)] $G=PGL_2(p)$ where $p$ is a prime modulo which not all of $2$, $3$, $5$ and $7$ are squares.
\item[b)] $G=M$ the sporadic monster group. 
\end{itemize}
Then there are infinitely many pairwise linearly disjoint locally abelian Galois extensions of $\qq$ with group $G$. 
\end{kor}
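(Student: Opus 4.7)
The plan is to deduce both cases from Theorem \ref{thm:crit_locab} combined with Proposition \ref{thm:inertia2}, by exhibiting in each case a $\mathbb{Q}$-regular Galois realization whose branch structure matches the hypotheses of that theorem: ramification indices $e_3=\dots=e_r=2$ at all but (at most) two branch points, $e_1\notin\{2,e_2\}$ with either $\gcd(e_1,e_2)=1$ or $G$ perfect, and the normalizer quotient $N_G(I_1)/I_1$ abelian with split defining extension.

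For part (b) I would apply Theorem \ref{thm:crit_locab} to Thompson's classical regular realization of the Monster $M$ over $\mathbb{Q}(t)$ coming from the rigid triple $(2A,3B,29A)$ (see \cite{MM}), which has exactly three branch points, of inertia orders $2$, $3$, $29$. Setting $e_1=29$, $e_2=3$, $e_3=2$, conditions (a) and (b) are immediate, using that $29\notin\{2,3\}$ and that $M$ is perfect (so the gcd condition is not needed). For (c), the normalizer of a Sylow $29$-subgroup of $M$ is the Frobenius group $29{:}14$; hence $N_M(I_1)/I_1\cong C_{14}$ is cyclic and the defining extension splits. Theorem \ref{thm:crit_locab} then produces a $\mathbb{Q}$-regular $M$-extension $\tilde{E}/\mathbb{Q}(t)$ fulfilling the hypotheses of Proposition \ref{thm:inertia2}, which in turn delivers the asserted infinite family.

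For part (a) I would invoke a Shih-type realization of $PGL_2(p)$ coming from the modular cover $X(p)\to X_0(\ell)$ for some $\ell\in\{2,3,5,7\}$ chosen to be a non-square modulo $p$; such an $\ell$ exists by hypothesis, and this non-square condition is precisely what forces the relevant twist to descend to a $\mathbb{Q}$-regular $PGL_2(p)$-cover of $\mathbb{P}^1_{\mathbb{Q}}$ (cf.\ \cite{MM}). Its ramification indices take values in $\{2,3,p\}$, with exactly one branch point of order $p$ (above a cusp) and all other inertia (at elliptic points) of orders $2$ or $3$. Taking $t_1$ to be the $p$-branch point and $t_2$ a branch point of order $3$ (or, if none exists, an auxiliary unramified rational point with $e_2=1$, using the construction in the proof of Theorem \ref{thm:crit_locab}), conditions (a) and (b) hold since $p\notin\{2,e_2\}$ and $\gcd(p,e_2)=1$. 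For (c), the normalizer of a Sylow $p$-subgroup of $PGL_2(p)$ is the Borel subgroup $C_p\rtimes C_{p-1}$, so $N_G(I_1)/I_1\cong C_{p-1}$ is abelian and the sequence splits. Theorem \ref{thm:crit_locab} together with Proposition \ref{thm:inertia2} then concludes as in part (b).

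The main obstacle is the input in part (a): locating a sufficiently explicit version of Shih's modular construction with a clean description of its ramification (one branch point of order $p$, the rest of orders $2$ or $3$) under the assumed non-square hypothesis. Once that input is secured, the remaining verifications in both cases reduce to elementary group-theoretic checks on Frobenius-type normalizers of Sylow $p$-subgroups.
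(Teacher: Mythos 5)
Your high-level strategy matches the paper exactly: both cases are deduced from Theorem \ref{thm:crit_locab} together with Proposition \ref{thm:inertia2}, using Thompson's rigid Monster realization for (b) and a modular-curve $PGL_2(p)$-realization for (a). However, there are two concrete issues.

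For the Monster, the paper cites the ATLAS to obtain $N_M(I_1)/I_1 \cong C_{28}\times C_3$ (the normalizer of a cyclic group of order $29$ in $M$ is $(29{:}14\times 3).2$, of order $29\cdot 84$), not $C_{14}$ as you assert; your stated structure $29{:}14$ is the normalizer inside $L_2(29)$, not inside $M$. Fortunately this misstatement does not break your argument, since $C_{14}$ and $C_{28}\times C_3$ are both abelian and both give split extensions (by Schur--Zassenhaus), so condition (c) of Theorem \ref{thm:crit_locab} is satisfied either way.

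For $PGL_2(p)$, your reliance on a generic Shih-type description with ``ramification indices in $\{2,3,p\}$'' has a genuine gap that you yourself flag but do not close. Condition (a) of Theorem \ref{thm:crit_locab} demands that \emph{all but at most two} of the branch points have ramification index $2$; a modular realization with several elliptic points of order $3$ would violate this. The paper instead cites \cite[Corollary 8.10]{MM} directly, which supplies explicit ramification types $(2B,4A,pA)$, $(2B,6A,pA)$, $(2B,2B,4A,pA)$, or $(2B,2B,3A,pA)$ depending on which of $2,3,5,7$ is a non-square modulo $p$. Note in particular that the possible non-$p$, non-order-$2$ indices here are $3$, $4$, or $6$ (not just $3$), each appearing exactly once, so that condition (a) of Theorem \ref{thm:crit_locab} holds, while condition (b) (namely $\gcd(e_1,e_2)=1$, needed because $PGL_2(p)$ is not perfect) is also easily checked. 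Without this concrete input --- or an equivalent one --- your part (a) remains incomplete; the remaining checks (Borel normalizer $C_p\rtimes C_{p-1}$, abelian quotient $C_{p-1}$, splitting) are correct and match the paper.
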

\begin{proof}
First, let $G=PGL_2(p)$. Then by Corollary 8.10 in \cite{MM}, there is a $\mathbb{Q}$-regular $G$-extension whose ramification type is one of the following (depending on whether $2$, $3$, $5$ or $7$ is a non-square modulo $p$): $(2B, 4A, pA)$, $(2B,6A,pA)$, $(2B,2B,4A,pA)$, or $(2B,2B,3A,pA)$.\footnote{Here, we use the usual ATLAS notation for conjugacy classes.}
Since the normalizer of a cyclic group of order $p$ in $PGL_2(p)$ equals $C_p\rtimes C_{p-1}$,\footnote{Indeed, this normalizer is the image of a Borel subgroup of $GL_2(p)$ under the canonical projection $GL_2(p)\to PGL_2(p)$.}
 all assumptions of Theorem \ref{thm:crit_locab} are fulfilled. The assertion thus follows from  Proposition \ref{thm:inertia2}.

Now let $G=M$.
It is known due to Thompson (\cite{Tho84}) that $G$ has a $\qq$-regular realization with three branch points, and ramification indices $2$, $3$ and $29$ respectively. Let $I$ be a cyclic group of order $29$ in $G$. From the ATLAS of finite simple groups, the normalizer of $I$ in $G$ fulfills $N_G(I)/I \cong C_{28}\times C_3$, and the extension is automatically split by Schur-Zassenhaus since the normal subgroup and its quotient group are of coprime order. Therefore all assumptions of Theorem \ref{thm:crit_locab} are fulfilled, yielding locally abelian extensions of $\mathbb{Q}$ with group $M$.
\end{proof}

It should be noted that Theorem \ref{thm:crit_locab} can be used to generate locally abelian extensions for many more interesting (e.g., simple) finite groups. In particular, \cite{MM} contains many realizations of sporadic simple groups over $\mathbb{Q}(t)$ with exactly $3$ branch points. Quite often, the three ramification indices are distinct and one of them is $2$. It then suffices to check the condition c) of Theorem \ref{thm:crit_locab}; note that this condition is fulfilled automatically e.g. if $I_1$ is self-centralizing in $G$. Then using the results of \cite[Chapter II.9]{MM}, one immediately obtains locally abelian extension for, e.g., the Baby Monster $B$, the Thompson group $Th$, etc. We leave the verification of these claims to the reader.

\subsection{Application to unramified extensions over cyclic number fields}
\label{sec:unram}
Here we give our application of locally abelian extensions to the construction of unramified Galois extensions of cyclic number fields with prescribed Galois group.
We begin with a general observation.
\begin{lemma}
\label{lem:unram}
Let $G$ be the Galois group of a locally abelian extension $F/\mathbb{Q}$. Assume that $F/\mathbb{Q}$ is tamely ramified.
Then $G$ occurs as the Galois group of an unramified extension of some cyclic number field $L$. Moreover, let $m$ denote the least common multiple of all ramification indices at ramified primes in $F/\mathbb{Q}$. Then
 one may choose $L$ such that $[L:\mathbb{Q}]\le m$. 
 \end{lemma}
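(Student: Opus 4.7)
The plan is to produce a cyclic number field $L/\mathbb{Q}$ of degree at most $m$ whose local ramification indices at the ramified primes of $F/\mathbb{Q}$ agree with those of $F$; Abhyankar's lemma for tame extensions then ensures that $M := FL$ is unramified over $L$, and, combined with linear disjointness, $M/L$ is the desired unramified $G$-extension.

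Write $S = \{p_1, \dots, p_s\}$ for the set of ramified primes of $F/\mathbb{Q}$ with ramification indices $e_1, \dots, e_s$, so that $m = \textrm{lcm}(e_1, \dots, e_s)$. Since $F/\mathbb{Q}$ is tame and locally abelian, each completion $F_{p_i}/\mathbb{Q}_{p_i}$ is a tame abelian local extension; local class field theory identifies its inertia with a cyclic quotient of $\mathbb{F}_{p_i}^\times$, so $e_i \mid p_i - 1$, and in particular every $p_i$ is odd. I would then invoke the Grunwald--Wang theorem to construct a cyclic $L/\mathbb{Q}$ of degree $m$ whose completion $L_{p_i}$ at each $p_i \in S$ is a totally tamely ramified cyclic extension of degree $e_i$, and which is unramified at all primes outside $S$. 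The local characters $\chi_i \colon \mathbb{Q}_{p_i}^\times \to \mathbb{Z}/e_i \hookrightarrow \mathbb{Z}/m$ are compatible with finding a global character of the idele class group of $\mathbb{Q}$ of order $m$; since $2 \notin S$ (tameness forbids ramification at $2$ in a locally abelian extension of $\mathbb{Q}$), no difficulty arises from the notorious special case of Grunwald--Wang.

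By Abhyankar's lemma applied to the tame pair $(F, L)$, the extension $FL/L$ is unramified at each prime of $L$ over $p_i \in S$, since the ramification indices of $F/\mathbb{Q}$ and $L/\mathbb{Q}$ at $p_i$ both equal $e_i$; it is unramified at the remaining primes, since $F/\mathbb{Q}$ is unramified outside $S$ (and any ramification of $L$ outside $S$ is absorbed by $L$ itself). To conclude $\Gal(FL/L) \cong G$, one needs $F \cap L = \mathbb{Q}$. The intersection $F \cap L$ is an abelian subfield of $F$, hence contained in $F^{\mathrm{ab}}$, which is itself unramified outside $S$; using the same Grunwald--Wang freedom to impose additional ramification on $L$ at an auxiliary prime $q \notin S$ (at which $F^{\mathrm{ab}}$ is unramified) one forces $F \cap L$ to be simultaneously unramified at $q$ and a subfield of the cyclic $L$, which pins it down to $\mathbb{Q}$. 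The field $M = FL$ is then an unramified $G$-extension of the cyclic number field $L$, with $[L : \mathbb{Q}] \le m$.

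The main obstacle I expect is the delicate use of Grunwald--Wang in the second step: simultaneously prescribing the local cyclic extensions $L_{p_i}$ at $S$, keeping $L/\mathbb{Q}$ cyclic of degree at most $m$, and arranging enough auxiliary ramification to force linear disjointness from $F$. The tameness hypothesis keeps the Grunwald--Wang special case out of play (by excluding $2$ from $S$), and the freedom at auxiliary primes provides enough room to dodge accidental intersections with $F^{\mathrm{ab}}$; still, the combinatorial bookkeeping needed to balance the local prescriptions with the global degree bound requires care.
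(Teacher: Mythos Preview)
Your argument is correct and matches the paper's proof essentially line for line: from tameness and local abelianness deduce $e_i \mid p_i - 1$, invoke Grunwald--Wang to produce a cyclic $L/\mathbb{Q}$ of degree $m$ with a totally ramified $C_{e_i}$-completion at each $p_i$, apply Abhyankar's lemma, and force $F\cap L=\mathbb{Q}$ via an auxiliary local condition. One cosmetic cleanup: drop the claim that $L$ is unramified outside $S$ (Grunwald--Wang does not promise this, and you yourself add ramification at an auxiliary $q$ moments later), and in the disjointness step make $L$ \emph{totally} ramified at $q$ so that every nontrivial subfield of the cyclic $L$ is ramified there---otherwise ``unramified at $q$ and contained in $L$'' does not by itself pin $F\cap L$ down to $\mathbb{Q}$.
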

\begin{proof}
Let $\{p_1,\dots,p_r\}$ be the set of ramified primes of $F/\qq$. Since the completion of $F/\qq$ at any prime extending $p_i$ has abelian Galois group, it follows that $\qq_{p_i}$ must contain the $e_i$-th roots of unity (where $e_i$ denotes the ramification index at $p_i)$, whence there exists a totally ramified $C_{e_i}$-extension of $\qq_{p_i}$. Due to the Grunwald-Wang theorem (e.g., Theorem 9.2.8 in \cite{NSW}), we may choose a cyclic extension $L/\qq$ of degree ${\rm lcm}\{e_1,\dots, e_r\}$ all of whose completions at ramified primes $p_i$ are the prescribed $C_{e_i}$-extensions. Abhyankar's lemma then yields that $FL/L$ is unramified. Since we may easily demand $L$ and $F$ to be linearly disjoint over $\qq$, e.g.\ by imposing further local conditions on $L$, we may assume without loss that $FL/L$ has Galois group $G$. 
\end{proof}

For the special case of solvable groups, the statement and proof of Lemma \ref{lem:unram} occurred in \cite[Main Theorem]{Kim19}.
Moreover, the special case where all ramification indices are equal to $2$ corresponds to unramified extensions of quadratic number fields, as treated, e.g., in \cite{KNS} and \cite{KRS}.

In order to obtain interesting classes of new examples, it is desirable to drop this tameness condition. We can do it at least after replacing the single locally abelian extension $F/\mathbb{Q}$ by a regular extension over $\mathbb{Q}(t)$ with suitable properties.

\begin{lemma}
\label{lem:unram2}
Let $G$ be the Galois group of a $\mathbb{Q}$-regular extension $E/\mathbb{Q}(t)$ which fulfills the assumptions of Theorem \ref{thm:crit_locab}. Assume furthermore that (in the notation of Theorem \ref{thm:crit_locab}) $N_G(I_1)/I_1$ is cyclic of order $n$. Then there exist infinitely many cyclic number fields of degree dividing ${\textrm{lcm}}(2,n)$ possessing unramified $G$-extensions.
\end{lemma}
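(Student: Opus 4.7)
The plan is to combine the locally abelian $G$-realizations produced by Theorem \ref{thm:crit_locab} (via Proposition \ref{thm:inertia2}) with a suitable cyclic base change $L/\mathbb{Q}$ that absorbs all ramification via Abhyankar's lemma, so that $FL/L$ becomes unramified. First I would invoke Theorem \ref{thm:crit_locab} to obtain a $\mathbb{Q}$-regular $G$-extension $\tilde{E}/\mathbb{Q}(t)$ unramified at $0$ and satisfying the hypotheses of Proposition \ref{thm:inertia2}, with residue field $K_0 := \tilde{E}_0$ cyclic over $\mathbb{Q}$ of some degree $d \mid n$ (since $\mathrm{Gal}(K_0/\mathbb{Q})$ embeds into $N_G(I_1)/I_1 \cong C_n$). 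Applying Proposition \ref{thm:inertia2} to $\tilde{E}$ then yields infinitely many pairwise linearly disjoint locally abelian $G$-extensions $F/\mathbb{Q}$, each of whose ramified primes split into two classes: $S_1(F)$, the primes tamely ramified of index $2$, and $S_2(F)$, the primes $p$ for which $F_p = (K_0)_p$ as extensions of $\mathbb{Q}_p$.

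For each such $F$, I would construct a cyclic $L = L(F)/\mathbb{Q}$ of degree dividing $\mathrm{lcm}(2,n)$ in two steps. First, pick a quadratic extension $K'/\mathbb{Q}$ ramified at every prime of $S_1(F)$ and totally split at every prime of $S_2(F)$; this is straightforward since these are finite disjoint sets, and one may simply prescribe an appropriate squarefree discriminant by quadratic reciprocity. Second, set $L := K_0 \cdot K'$ if $d$ is odd, and otherwise let $L$ be the fixed field inside $K_0 \cdot K'$ of the diagonal $C_2$ subgroup generated by the unique involutions of $\mathrm{Gal}(K_0/\mathbb{Q})$ and $\mathrm{Gal}(K'/\mathbb{Q})$. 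A direct check then shows that $L/\mathbb{Q}$ is cyclic with $[L:\mathbb{Q}]$ dividing $\mathrm{lcm}(2,d)$, hence dividing $\mathrm{lcm}(2,n)$. Unramifiedness of $FL/L$ follows prime-by-prime: at $p \in S_1(F)$ both $F/\mathbb{Q}$ and $L/\mathbb{Q}$ carry tame ramification of index exactly $2$, so Abhyankar's lemma annihilates the ramification in $FL/L$; and at $p \in S_2(F)$ the completion $L_p$ already contains $F_p = (K_0)_p$ (because $K'$ is split at $p$), so $FL$ is locally trivial over $L$ at $p$.

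Finally I must verify $\mathrm{Gal}(FL/L) \cong G$ and that infinitely many distinct $L$'s arise. For the first, since $L/\mathbb{Q}$ is abelian, $F \cap L$ lies in $F \cap \mathbb{Q}^{\mathrm{ab}}$; by Hilbert irreducibility (in the spirit of the proof of Proposition \ref{thm:inertia2}) I can pass to the still-infinite subfamily of $F$'s linearly disjoint from any prescribed finite abelian extension of $\mathbb{Q}$, forcing $F \cap L = \mathbb{Q}$. Since the $F$'s are pairwise linearly disjoint their ramification loci must vary, and only finitely many $F$ can share a given $S_1 \sqcup S_2$, hence a given $L$, so infinitely many cyclic $L$'s of degree dividing $\mathrm{lcm}(2,n)$ arise. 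The main obstacle will be the coordination between these two requirements: as $F$ varies, $L(F)$ varies too, but one must keep $F \cap L(F) = \mathbb{Q}$ for each specific pair. I expect this to be handled cleanly by the flexibility in choosing the auxiliary quadratic $K'(F)$ — for instance by prescribing additional splitting primes in $K'(F)$ to guarantee disjointness — but it is here that the bookkeeping requires the most care.
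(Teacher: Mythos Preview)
Your overall strategy coincides with the paper's: obtain the locally abelian $G$-extensions $F/\mathbb{Q}$ from Theorem~\ref{thm:crit_locab} and Proposition~\ref{thm:inertia2}, then pass to a cyclic base $L/\mathbb{Q}$ whose local behavior matches that of $F$ at every ramified prime, so that $FL/L$ is unramified by Abhyankar. The one substantive difference lies in how $L$ is produced. The paper applies the Grunwald--Wang theorem directly to manufacture a cyclic extension of degree dividing $\mathrm{lcm}(2,n)$ with the prescribed completions (tamely ramified of index~$2$ at each $p\in S_1\setminus S_2$, equal to $(K_0)_p$ at each $p\in S_2$), handling the exceptional prime~$2$ by noting that $2\in S_2$ whenever it ramifies. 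You instead build $L$ explicitly as $K_0\cdot K'$ (or its diagonal quotient), which is more elementary but needs extra care: you should make explicit that the set $S$ in the proof of Proposition~\ref{thm:inertia2} must be enlarged to contain all primes ramifying in $K_0$, so that every $p\in S_1(F)$ is unramified in $K_0$ --- otherwise your ramification-index computation for $L$ at such $p$ can fail. Two further minor points: the implication ``pairwise linearly disjoint $\Rightarrow$ ramification loci vary'' is not literally correct; what you want is Hermite--Minkowski (only finitely many $G$-extensions of $\mathbb{Q}$ unramified outside a fixed finite set), which does yield infinitely many distinct $L$'s. And for $F\cap L(F)=\mathbb{Q}$, your suggested fix works --- ramify $K'$ at an auxiliary prime unramified in $FK_0$, forcing $K'\not\subset FK_0$ and hence $K_0K'\cap F=\mathbb{Q}$ --- but this should be stated rather than left as an expectation. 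The Grunwald--Wang route in the paper absorbs all of this bookkeeping into a single invocation, at the cost of citing a deeper result.
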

\begin{proof}
Theorem \ref{thm:crit_locab}, together with Proposition \ref{thm:inertia2}, yields infinitely many locally abelian $G$-extensions $F/\mathbb{Q}$ in which all ramified primes $p$ are either tamely ramified of index $2$, or fulfill $F_p/\mathbb{Q}_p = K_p/\mathbb{Q}_p$, where $K/\mathbb{Q}$ is a fixed cyclic extension of degree dividing $n$ and depending only on $E/\mathbb{Q}(t)$ (namely the extension denoted by $\tilde{E}_0/\mathbb{Q}$ in Theorem \ref{thm:crit_locab}). 

For a fixed $F/\mathbb{Q}$, call the set of primes with the former property $S_1$, and the set of primes with the latter property $S_2$. Note that if $2$ is ramified in $F/\mathbb{Q}$, then $2\in S_2$ by definition.
We may now choose a cyclic extension $L/\mathbb{Q}$ of degree dividing ${\textrm{lcm}}(2,n)$ such that $L_p/\mathbb{Q}_p$ is tamely ramified of index $2$ for all $p\in S_1 \setminus S_2$, and $L_p/\mathbb{Q}_p = K_p/\mathbb{Q}_p$ for all $p\in S_2$. Such $L/\mathbb{Q}$ always exists by the Grunwald-Wang theorem. Indeed, as long as $n$ is odd, our case is never in the exceptional case of the Grunwald-Wang theorem; and if $n = {\textrm{lcm}}(2,n)$ is even, one may choose $[L:\mathbb{Q}]=[K:\mathbb{Q}]$ and simply note that there is no obstruction at the prime $2$, since existence of a degree-$[K:\mathbb{Q}]$ cyclic extension with the required behavior at the prime $2$ is ensured by $K/\mathbb{Q}$ itself. Since $L_p/\mathbb{Q}_p=F_p/\mathbb{Q}_p$ for all $p\in S_2$, one has that $FL/L$ is unramified at all $p\in S_2$; and same at all $p\in S_1\setminus S_2$ by Abhyankar's lemma. Finally, ${\Gal(FL/L)}=G$ is ensured as in the previous proof.
\end{proof} 

\begin{kor}
\label{cor:appl_unram}
Let $G$ be  one of the following:
\begin{itemize}
\item[a)] $G=PGL_2(p)$ where $p$ is a prime modulo which not all of $2$, $3$, $5$ and $7$ are squares.
\item[b)] $G=M$ the sporadic monster group. 
\end{itemize}
Then there are infinitely many cyclic number fields possessing unramified $G$-extensions.
\end{kor}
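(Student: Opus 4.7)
The plan is to deduce the corollary directly from Lemma \ref{lem:unram2} applied to the same $\mathbb{Q}$-regular $G$-extensions $E/\mathbb{Q}(t)$ that were used in the proof of Corollary \ref{lem:appl_locab}. Since the hypotheses of Theorem \ref{thm:crit_locab} were already verified there (and are the only prerequisites of Lemma \ref{lem:unram2} besides cyclicity of $N_G(I_1)/I_1$), the one remaining task in each case is to check that the quotient $N_G(I_1)/I_1$ is cyclic, where $I_1$ denotes the inertia group at the branch point with the distinguished ramification index $e_1$.

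For part (a) with $G = PGL_2(p)$, I would choose $I_1$ to be the cyclic subgroup of order $p$ arising as inertia at the $pA$-class branch point in any of the ramification types listed in the proof of Corollary \ref{lem:appl_locab}. As noted there, $N_G(I_1)$ is the image of a Borel subgroup of $GL_2(p)$, so $N_G(I_1) \cong C_p \rtimes C_{p-1}$ and hence $N_G(I_1)/I_1 \cong C_{p-1}$ is cyclic of order $n = p-1$. Lemma \ref{lem:unram2} then produces infinitely many cyclic number fields of degree dividing $\mathrm{lcm}(2, p-1) = p-1$ (for the odd primes $p$ under consideration) admitting unramified $PGL_2(p)$-extensions.

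For part (b) with $G = M$, I would take $I_1$ to be the order-$29$ cyclic subgroup arising as inertia at Thompson's ramification-$29$ branch point. By the ATLAS entry for $M$ (already invoked in the proof of Corollary \ref{lem:appl_locab}), one has $N_G(I_1)/I_1 \cong C_{28} \times C_3$; since $\gcd(28, 3) = 1$, this direct product is in fact cyclic of order $84$. Lemma \ref{lem:unram2} therefore furnishes infinitely many cyclic number fields of degree dividing $\mathrm{lcm}(2, 84) = 84$ with unramified Monster extensions.

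There is no genuine obstacle here: the argument is a direct invocation of Lemma \ref{lem:unram2} together with the regular extensions and inertia/normalizer data already compiled in the proof of Corollary \ref{lem:appl_locab}. The only substantive check is the elementary group-theoretic observation in case (b) that $C_{28}\times C_3$ is cyclic by coprimality of orders.
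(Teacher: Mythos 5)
Your proposal is correct and follows essentially the same route as the paper's own proof: invoke Lemma \ref{lem:unram2} with the regular extensions and inertia data from Corollary \ref{lem:appl_locab}, and observe that $N_G(I_1)/I_1$ is cyclic of order $p-1$ in case (a) and of order $84 = 28\cdot 3$ (with $\gcd(28,3)=1$) in case (b).
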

\begin{proof}
This follows from Lemma \ref{lem:unram2} and the proof of Corollary \ref{lem:appl_locab}. Indeed, in all cases of Corollary \ref{lem:appl_locab}, $N_G(I_1)/I_1$ (in the notation of Theorem \ref{thm:crit_locab}) was cyclic - of order $p-1$ in case a) and of order $84$ in case b).
\end{proof}

\begin{remark}
Following a construction of Uchida (\cite{Uchida}) and Yamamoto (\cite{Yamamoto}), every finite group $G$ can be realized as the Galois group of an unramified Galois extension of {\it some} number field $K$, by embedding $G\le A_n$, and then choosing $K$ as the fixed field of $G$ in a suitable $S_n$-extension $F/\mathbb{Q}$ (namely, with all inertia groups generated by transpositions, which implies that $F/\textrm{Fix}(A_n)$ is unramified). In the case $G=M$, this approach would yield $[K:\mathbb{Q}]$ somewhere near the factorial of $10^{20}$ (which is roughly the smallest permutation degree of $M$), whereas our construction gives $[K:\mathbb{Q}]\le 84$.
\end{remark}

\section{Locally cyclic extensions of $\mathbb{Q}$}
\label{sec:loccyc}
We now move on to the more restrictive notion of locally cyclic extensions. 
A positive answer to Question \ref{ques:cyclic} is known for all solvable groups, due to Shafarevich's solution of the inverse Galois problem for solvable groups (see, e.g., \cite[Chapter IX.6]{NSW}).

\begin{theorem}[Shafarevich]
\label{thm:shaf}
Let $G$ be a finite solvable group and $K$ a number field. 
Then there are infinitely many linearly disjoint\footnote{The linear disjointness is automatic from the assertion for $G^k$ for all exponents $k\in \mathbb{N}$.} $G$-extensions $L/K$ fulfilling the following:
\begin{itemize}
\item[i)] $L/K$ is tamely ramified.
\item[ii)] All decomposition groups at ramified primes in $L/K$ are cyclic and equal the respective inertia groups.
\end{itemize}
In particular, $L/K$ is locally cyclic.
\end{theorem}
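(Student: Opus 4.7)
The natural approach is to reprise Shafarevich's inductive construction, with emphasis on controlling the ramification and decomposition data at each step. I would argue by induction on $|G|$; the trivial group is vacuous. For the inductive step, choose a minimal normal subgroup $A \trianglelefteq G$, which by solvability is elementary abelian of some prime exponent $\ell$. By the inductive hypothesis, there is a tame Galois extension $L/K$ with $\Gal(L/K) = G/A$ satisfying (i) and (ii) (and, as the right strengthening to carry through the induction, with an abundance of auxiliary primes with prescribed trivial-Frobenius behavior). The task reduces to solving the embedding problem
\[
1 \to A \to G \to G/A \to 1, \qquad \Gal(L/K) = G/A
\]
properly in such a way that the resulting field $M/K$ again satisfies (i) and (ii).

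The plan is to produce $M/K$ in Scholz form: demand that $M/L$ be unramified, with trivial Frobenius, at every prime of $L$ lying over a prime of $K$ ramified in $L/K$, and that every new prime $\q$ of $K$ ramified in $M/L$ split completely in $L(\zeta_\ell)/K$ (so that its completion contains $\mu_\ell$ and admits a tame totally ramified $C_\ell$-extension) and be tamely, totally, cyclically ramified of order $\ell$. The existence of such a proper solution is the content of Shafarevich's embedding-problem argument (cf.\ \cite[Ch.\ IX.6]{NSW}): Chebotarev furnishes sufficiently many auxiliary primes $\q$ of the prescribed splitting type, which are then used, via a Grunwald--Wang type local-global input, to kill the cohomological obstruction to the embedding problem while enforcing the above local prescriptions. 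Granting this, property (ii) can be checked prime by prime: at primes $\p$ of $K$ ramified in $L/K$, the local triviality of $M/L$ means the decomposition group in $M/K$ projects isomorphically onto the cyclic decomposition-equals-inertia group of $L/K$ at $\p$; at new ramified primes, complete splitting in $L/K$ reduces the local extension to a tame totally ramified $C_\ell$-extension of the completion of $K$. Tameness (i) is preserved by construction.

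Infinitely many linearly disjoint $G$-realizations with (i) and (ii) now follow by running the same argument with $G^k$ in place of $G$ for every $k \in \mathbb{N}$: any realization of $G^k$ satisfying (i) and (ii) decomposes into $k$ pairwise linearly disjoint $G$-realizations inheriting the local properties factor by factor, as indicated in the footnote. The main obstacle is the Scholz-type embedding-problem step itself: one must solve the embedding problem properly and simultaneously enforce trivial-Frobenius conditions at the old ramified primes and totally-ramified-$C_\ell$ conditions at the new auxiliary primes. This is precisely why the induction is formulated not just with (i) and (ii) but with an auxiliary statement guaranteeing a reservoir of free, completely split primes at each stage, allowing the next embedding problem to be solved without disturbing the previously established local picture.
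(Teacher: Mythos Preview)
The paper does not supply a proof of this theorem at all: it is stated as a known result of Shafarevich, with a bare citation to \cite[Chapter IX.6]{NSW}, and then used as input elsewhere. Your outline is a sketch of exactly the Scholz--Reichardt--Shafarevich argument found in that reference, so there is no divergence to discuss --- you and the paper are pointing to the same source.

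One substantive caveat on your sketch, however. The induction via an arbitrary minimal normal subgroup $A$ does not work as written, because when the short exact sequence $1\to A\to G\to G/A\to 1$ is non-split, the embedding problem over a \emph{given} $L$ can be genuinely obstructed, and no amount of Chebotarev bookkeeping at auxiliary primes will remove a nonzero class in $H^2$. Shafarevich's actual route is more elaborate: one first reduces to $\ell$-groups (every finite solvable group is a quotient of an iterated semidirect product of $\ell$-groups, and split embedding problems with nilpotent kernel are always properly solvable with the desired local control, cf.\ \cite[Theorem 9.6.7]{NSW}); then, for $\ell$-groups, one climbs a central series and uses the stronger notion of a \emph{Scholz extension} to ensure that each successive central degree-$\ell$ embedding problem is unobstructed. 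Your closing remark about strengthening the inductive hypothesis gestures at the second of these points, but the reduction to the $\ell$-group case and the specific role of the Scholz condition in killing the $H^2$-obstruction are genuine steps that your outline elides.
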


More generally, from Shafarevich's method we obtain an answer to Question \ref{ques:cyclic} for certain split group extensions.
\begin{lemma}
\label{rmk:split}
Let $G$ be a finite group and $K$ a number field. Assume that Question \ref{ques:cyclic} has a positive answer for $G$ over $K$. Then it also has a positive answer for all split extensions $G\ltimes N$ with nilpotent kernel $N$.
\end{lemma}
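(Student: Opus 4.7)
The plan is an inductive adaptation of Shafarevich's construction of locally cyclic solvable extensions (Theorem~\ref{thm:shaf}). Since $N$ is nilpotent, one can refine a central series to obtain a chain $N = N_0 \supset N_1 \supset \cdots \supset N_k = \{1\}$ with each $N_i$ characteristic in $N$ (hence $G$-invariant) and each quotient $A_i := N_i/N_{i+1}$ elementary abelian of some prime exponent $p_i$. Setting $H_i := G \ltimes (N/N_i)$, we obtain a tower of surjections
\[
G = H_0 \twoheadleftarrow H_1 \twoheadleftarrow \cdots \twoheadleftarrow H_k = G\ltimes N,
\]
in which each $H_{i+1}\to H_i$ has elementary abelian kernel $A_i$ naturally carrying an $H_i$-module structure. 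By induction on $i$, it suffices to solve the following embedding step: given a locally cyclic $H_i$-extension $L/K$ (which exists by hypothesis for $i=0$, and by the inductive step for $i\geq 1$) and the elementary abelian $H_i$-module $A_i$, one finds $L\hookrightarrow M$ with $M/K$ Galois of group $H_{i+1}$, such that $M/K$ is again locally cyclic.

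To carry out this step, I would follow Shafarevich's strategy by prescribing the local behavior of $M/K$ at two distinguished sets of primes of $K$. First, at each prime $\mathfrak{p}$ of $K$ ramified in $L/K$, force $M/L$ to be completely split at every prime of $L$ above $\mathfrak{p}$; then the decomposition group at $\mathfrak{p}$ in $M/K$ coincides with that in $L/K$, hence is cyclic by assumption. Second, at a suitable auxiliary finite set $T$ of primes of $K$ that split completely in $L(\mu_{p_i})/K$ (abundant by Chebotarev, and in particular avoiding primes above $p_i$), prescribe $M/L$ to be totally tamely ramified of order $p_i$ at one prime of $L$ above each $\mathfrak{p}\in T$; since $\mathfrak{p}$ splits in $L/K$, the decomposition group at $\mathfrak{p}$ in $M/K$ then lies entirely in $A_i$ and is cyclic. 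At all remaining primes of $K$, $M/K$ will be unramified and the decomposition group (generated by a Frobenius element) is cyclic automatically.

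The existence of $M/K$ realizing these prescribed local conditions is provided by the Grunwald--Wang theorem applied to the $G_K$-module $A_i$ (equivalently, Poitou--Tate duality for $H^1(G_K,A_i)$), together with the freedom to enlarge $T$ which allows one to avoid any cohomological obstructions; properness of the lift onto $H_{i+1}$ is ensured by choosing $T$ large enough that the prescribed local data generate $A_i$ as an $H_i$-module. The main obstacle lies in the simultaneous bookkeeping of all these local-global compatibility conditions through the layers of the induction, but this is routinely handled by the classical methods of \cite[Chapter IX.6]{NSW}. Finally, infinitely many pairwise linearly disjoint locally cyclic $G\ltimes N$-extensions are obtained by starting from infinitely many pairwise linearly disjoint locally cyclic $G$-extensions (given by hypothesis) and, at each inductive step, choosing the auxiliary sets $T$ freshly to enforce disjointness.
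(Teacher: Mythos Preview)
Your approach is essentially correct and recapitulates Shafarevich's method step by step, whereas the paper invokes it as a black box: rather than decomposing $N$ into elementary abelian layers, the paper directly cites \cite[Theorem 9.6.7]{NSW}, which for any $G$-extension $L/K$ produces a $G\ltimes N$-extension $F/K$ in which every prime ramified in $L/K$ splits completely in $F/L$, and every prime ramified in $F/L$ has cyclic decomposition group equal to its inertia group in $F/K$. This is precisely the content of your inductive step, so on that front the two arguments agree.

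The genuine difference is in how linear disjointness is obtained. The paper does \emph{not} lift the $G$-extensions one at a time and then argue disjointness of the lifts. Instead, for each $k\in\mathbb{N}$ it applies Shafarevich's theorem to the single split embedding problem $1\to N^k\to (G\ltimes N)^k\to G^k\to 1$, starting from a compositum $L/K$ of $k$ linearly disjoint locally cyclic $G$-extensions (which exists by hypothesis). The resulting $(G\ltimes N)^k$-extension $F/K$ then has, at every ramified prime, a decomposition group whose projection to each of the $k$ factors is cyclic; hence $F$ is automatically a compositum of $k$ linearly disjoint locally cyclic $G\ltimes N$-extensions. Since $k$ is arbitrary, the assertion follows. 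Your sentence ``choosing the auxiliary sets $T$ freshly to enforce disjointness'' is vaguer: if two lifts $M_i,M_j$ shared a common quotient of $G\ltimes N$ that does not factor through $G$, disjointness of the $L_i$ alone would not rule this out, and one would need to argue carefully via ramification in the various $T_i$. This can be made to work, but the paper's $G^k$ trick bypasses the bookkeeping entirely.
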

\begin{proof}
This follows from Shafarevich's method of solving split embedding problems with nilpotent kernel (see \cite[Theorem 9.6.7]{NSW}), which, given some $G$-realization $L/K$, guarantees the existence of a $G\ltimes N$-extension $F/K$ containing $L/K$ such that all ramified primes of $L/K$ split completely in $F/L$ and all ramified primes of $F/L$ have cyclic decomposition groups equal to the respective inertia group in $F/K$. 
We apply this result for the split  embedding problem corresponding to $1\to N^k \to (G\ltimes N)^k \to G^k \to 1$, for any positive integer $k$. Since we may begin with a $G^k$-extension $L/K$ which is a compositum of $k$ linearly disjoint locally cyclic $G$-extensions, the resulting $(G\ltimes N)^k$-extension $F/K$ has the property that all decomposition groups in $F/K$ at primes which do not ramify in $L/K$ are cyclic, and all decomposition groups in $F/K$ at primes which ramify in $L/K$ equal the respective decomposition group in $L/K$. The latter decomposition groups are assumed to be such that their projection onto each of the $k$ components is cyclic, whence $F/K$ is the compositum of $k$ linearly disjoint locally cyclic $G\ltimes N$-extensions.
Since this holds for any $k\in \mathbb{N}$, the assertion follows.
\end{proof}


For non-solvable groups, not only is a general answer to Question \ref{ques:cyclic} unknown, but to our knowledge, there is not even a single example of a non-solvable group in the literature with a positive answer to Question \ref{ques:cyclic} (of course, from looking at databases such as {\texttt{lmfdb.org}} or \texttt{http://galoisdb.math.upb.de}, one can easily obtain {\it finitely many} locally cyclic realizations for certain non-solvable groups).

\subsection{A general method}
It is easy to gain specialization criteria in the spirit of Proposition \ref{thm:inertia2}, such as the following (the proof is analogous to that of Proposition \ref{thm:inertia2}, except that instead of Theorem \ref{thm:beck}, one should invoke Theorem \ref{thm:kln}):

(*) Let $E/\mathbb{Q}(t)$ be a $\mathbb{Q}$-regular $G$-extension whose decomposition groups at all the branch points are cyclic and which possesses at least one locally cyclic specialization $E_{t_0}/\mathbb{Q}$ at a non-branch point $t_0$. Then there exist infinitely many locally cyclic $G$-extensions which are specializations of $E/\mathbb{Q}(t)$.

The reason why we will not pursue Criterion (*) any further here is that the assumptions here are much more restrictive than in the previous section and very hard to fulfill for groups of interest. 

Instead, in order to obtain applicable criteria, we will make use of the theorems of Green-Tao-Ziegler on prime specializations of linear forms. Since in these results, the specialization values can in general not be chosen in arbitrary prescribed residue classes, we cannot expect to use the same trick as in the proof of Proposition \ref{thm:inertia2}, dealing with the behavior at certain small primes via Krasner's lemma.
We therefore begin with exhibiting a general method, which however ignores a certain fixed finite set of primes. 

\begin{theorem}
\label{thm:crit}
Let $E/\mathbb{Q}(t)$ be a $\qq$-regular $G$-extension with branch points $t_1, \dots ,t_r$, all rational (i.e., contained in $\mathbb{P}^1(\qq)$). 
Then there exists a finite set $S_0$ of primes of $\qq$ (depending only on $E/\mathbb{Q}(t)$), and infinitely many $G$-extensions of $\qq$ all of whose decomposition groups at ramified primes, except possibly at the ones in $S_0$, 
are cyclic and equal to the respective inertia subgroup.
%
%
\end{theorem}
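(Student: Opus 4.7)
The strategy is to combine Theorem~\ref{thm:beck}, Theorem~\ref{thm:kln} (via Remark~\ref{rem:combine_beckkln}), and the Tao--Ziegler Theorem~\ref{thm:greentao}, by specializing $E/\qq(t)$ at carefully chosen rational $t_0 = x_0/y_0$. First I would carry out a cosmetic reduction: by a $\qq$-linear fractional change of variable on $t$, I may assume $\infty$ is not a branch point, and after a further rescaling $t \mapsto t/N$ I may assume all branch points $t_1,\dots,t_r$ lie in $\zz$. For each $i$ let $L_i := E_{t_i}/\qq$ be the residue extension of $E/\qq(t)$ at the rational point $t_i$ (a finite Galois extension of $\qq$), let $L$ be the compositum of $L_1,\dots,L_r$, and let $\mathcal{S}$ be the positive-density Chebotarev set consisting of rational primes that split completely in $L/\qq$.

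Next I would apply Theorem~\ref{thm:greentao} to the polynomials $p_i(Y) := -t_i Y \in \zz[Y]$, each satisfying $p_i(0)=0$, together with the set $\mathcal{S}$. This produces infinitely many pairs $(x_0,y_0) \in \zz^2$ for which every value
\[
x_0 + p_i(y_0) = x_0 - t_i y_0
\]
is a prime number $p_i \in \mathcal{S}$. For each such pair set $t_0 := x_0/y_0$. I would define $S_0$ to be the union of the finite exceptional set of Theorems~\ref{thm:beck} and \ref{thm:kln} (which depends only on $E/\qq(t)$), the set of prime divisors of the nonzero differences $t_i - t_j$ ($i\ne j$), and $\{2\}$. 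One then checks that for $p \notin S_0$ the primes $p_1,\dots,p_r$ are pairwise distinct (a common divisor $p$ of $p_i$ and $p_j$ would have to divide $(t_i-t_j)y_0$, and $p \nmid y_0$ because $p$ also divides $x_0$ but cannot divide $\gcd(x_0,y_0)$). In particular $\gcd(x_0, y_0) = 1$.

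Now the local verification is essentially automatic. Fix any prime $p \notin S_0$ ramified in $E_{t_0}/\qq$. Theorem~\ref{thm:beck} forces $p$ to divide $x_0 - t_i y_0 = \pm p_i$ for some (by the previous paragraph, unique) branch point $t_i$, with $I_p(t_0, t_i) = v_p(x_0 - t_i y_0) = 1$. Hence the inertia group at $p$ in $E_{t_0}/\qq$ is the full (cyclic) inertia subgroup $I_i$ at $t_i$. Moreover $p = p_i$ lies in $\mathcal{S}$ and so splits completely in $L \supseteq L_i$, i.e.\ the residue degree of $p$ in $L_i/\qq$ equals $1$. Case (b) of Theorem~\ref{thm:kln} (via Remark~\ref{rem:combine_beckkln}) then yields that the decomposition group of $p$ in $E_{t_0}/\qq$ coincides with the cyclic inertia subgroup $I_i$.

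Finally, to obtain infinitely many \emph{distinct} $G$-extensions I would invoke Hilbert irreducibility: the set of $t_0$ for which $\Gal(E_{t_0}/\qq)$ is a proper subgroup of $G$ is contained in a thin set, of size $O(N^{2-\delta})$ in boxes $[-N,N]^2$, whereas the quantitative form of Tao--Ziegler supplies $\gg N^2/(\log N)^C$ admissible pairs $(x_0,y_0)$ in the same box; the bulk therefore survives. Distinct pairs $(x_0,y_0)$ yield distinct $t_0$ (generically), and the resulting specializations are distinguished e.g.\ by their ramification loci $\{p_1,\dots,p_r\}$, giving infinitely many $G$-extensions of the required type. The main obstacle I foresee is precisely this last compatibility between Tao--Ziegler and Hilbert irreducibility; extracting the needed quantitative lower bound is implicit in \cite{TZ}, but it is the only step where one truly leaves the elementary framework of Sections 3.1--3.2.
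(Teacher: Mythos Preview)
Your argument is essentially the paper's own proof: reduce to integral branch points, apply Tao--Ziegler to the linear polynomials $p_i(Y)=-t_iY$ with target the Chebotarev set of primes split in the compositum of the residue extensions at the $t_i$, and then read off the decomposition groups from Theorems~\ref{thm:beck} and~\ref{thm:kln} via Remark~\ref{rem:combine_beckkln}. Two remarks: your final quantitative step (Tao--Ziegler count $\gg N^{2}/(\log N)^{C}$ versus $O(N^{2-\delta})$ pairs lying over a thin set) actually goes beyond the paper, which stops after controlling the decomposition groups and never argues $\Gal(E_{t_0}/\qq)=G$, so your caution there is well placed; and your derivation of $\gcd(x_0,y_0)=1$ is circular as written (you invoke it to conclude $p\nmid y_0$), though the repair is immediate since any common prime factor of $x_0,y_0$ divides every $x_0-t_iy_0$, hence equals every $p_i$, which for $y_0\ne 0$ forces $t_0$ into a finite set.
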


\begin{proof}
%
%
Up to fractional linear transformation, we can even assume that all branch points $t_1,\dots,t_r$ of $E/\mathbb{Q}(t)$ are integers. 
Let $\mathcal{S}$ be the set of primes which split completely in the compositum $F$ of all residue extensions of $E/\mathbb{Q}(t)$ at branch points $t_i$, $i=1,\dots,r$. 
By increasing $F$ suitably, we may assume that $\mathcal{S}$ is disjoint from the finite set of primes for which the assertion of either Theorem \ref{thm:beck} or Theorem \ref{thm:kln} (for $E/\mathbb{Q}(t)$) fails.
By Chebotarev's density theorem, the set $\mathcal{S}$ is of positive density in the set of prime numbers.
From the Tao-Ziegler theorem about polynomial progressions in primes (Theorem \ref{thm:greentao}), it then follows that there are infinitely many $(x_0:y_0)\in \mathbb{P}^1(\qq)$ such that all values $x_0-t_iy_0$, $i=1,\dots,r$, are primes contained in $\mathcal{S}$. 
From Theorem \ref{thm:beck}, it follows that there exists a finite set $S_0$ of primes (depending only on $E/\mathbb{Q}(t)$) such that all primes $p\notin S_0$ which ramify in $E_{t_0}/\mathbb{Q}$ are divisors of $F(x_0,y_0)$, where $F(X,Y)=\prod_{i=1}^r (X-t_iY)$. Hence all these primes are contained in $\mathcal{S}$.
 Since all these primes have residue degree $1$ in all the residue extensions at branch points of $E/\mathbb{Q}(t)$, Remark \ref{rem:combine_beckkln}
 yields that the decomposition group at the prime $x_0-t_iy_0$ in $E_{t_0}/\mathbb{Q}$ is cyclic and equal to the corresponding inertia group. This completes the proof.
\end{proof}

\begin{remark}
The idea to invoke Green-Tao-Ziegler to obtain results about specializations of regular $G$-extensions occurs in work by Bary-Soroker and Schlank regarding the minimal ramification problem (\cite{BSS}). Note that their application is concerned only with the number of ramified primes (not with their decomposition groups, and hence in particular not with their Frobenius in certain prescribed extensions). What is new in the present work is the combination of Green-Tao-Ziegler with results about decomposition groups in specializations of function fields as exhibited in \cite{KLN}. Note that for our purposes, it would actually be sufficient to force all prime divisors of all values $x-t_iy$ (as in the above proof) into a prescribed Chebotarev set, whereas primality of the values is not required. However, we are not aware of any general criterion other than Green-Tao-Ziegler asserting such a result.
\end{remark}


\subsection{Sample applications: The symmetric groups $S_n$}

To improve on Theorem \ref{thm:crit} at least in special cases by getting rid of the finite exceptional set $S_0$, we look at some extensions of $\mathbb{Q}(t)$ given by explicit polynomials $f(t,X)\in \zz[t,X]$. By doing this, we construct the first infinite families of locally cyclic extensions with nonsolvable Galois groups.

\begin{theorem}
\label{thm:dirprod}
Let $n\le 10$, and let $G=S_n$.
Then for any $k\in \nn$, there exist infinitely many tamely ramified $G^k$-extensions $L/\mathbb{Q}$ such that, 
for all finite primes which ramify in $L/\mathbb{Q}$, the decomposition groups are cyclic and equal the respective inertia groups.
Furthermore, the $L/\mathbb{Q}$ may be chosen pairwise linearly disjoint.
\end{theorem}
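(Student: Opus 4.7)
The case $n\le 4$ is immediate from Shafarevich's Theorem \ref{thm:shaf} since $S_n$ is solvable, so fix $n\in\{5,\dots,10\}$. For each such $n$, the approach is to exhibit a single $\mathbb{Q}$-regular $S_n$-cover $E/\mathbb{Q}(t)$ coming from the explicit polynomial
\[
 f(t,x) \;=\; abc\, x^n \;-\; t\bigl(a(x-1)^m+bx^n\bigr)\in\mathbb{Z}[t,x],
\]
where $1\le m<n$ is coprime to $n$ and $a,b,c$ are rational primes produced by an extended-Euclidean-style search (as hinted by the preceding commented-out code) subject to the Diophantine identity $a\,m^m(n-m)^{n-m}+b\,n^n=c$, together with the auxiliary congruences $a\equiv 1\pmod n$, $b\equiv 1\pmod{m(n-m)}$ and the condition that the reduction of $m^m(n-m)^{n-m}x^n-n^n(x-1)^m$ modulo $c$ has exactly $n-1$ distinct roots in $\mathbb{F}_c$. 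A direct critical-point analysis of the rational function $t(x)=abc\,x^n/\bigl(a(x-1)^m+bx^n\bigr)$ reveals precisely three rational branch points $t=0$, $t=ac$, $t=abn^n$, with inertia generators of cycle types $(n)$, $(m,n-m)$ and $(2,1^{n-2})$ respectively; since an $n$-cycle and a transposition generate $S_n$, the monodromy is exactly $S_n$ and the cover is $\mathbb{Q}$-regular.

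\textbf{Suppressing the exceptional set.} The role of the Diophantine identity $a\,m^m(n-m)^{n-m}+b\,n^n=c$ is to force $\mathrm{Disc}_x(f)$, viewed as a polynomial in $t$, to factor (up to a scalar supported only on $\{a,b,c\}$) as the product of the three linear forms cutting out the branch points. Combined with a reduction-mod-$p$ check powered by the chosen congruences on $a,b,c$, this shows that the exceptional set $S_0$ from Theorems \ref{thm:beck} and \ref{thm:kln} is contained in $\{a,b,c\}$ and that in every integer specialization each of $a,b,c$ is either unramified or has cyclic decomposition group matching the inertia at one of the three branch points.

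\textbf{Green-Tao-Ziegler and $S_n^k$.} Next I would invoke the Tao-Ziegler theorem (Theorem \ref{thm:greentao}) applied to the polynomials $p_0(Y)=0$, $p_1(Y)=-ac\,Y$, $p_2(Y)=-abn^n\,Y$ (each satisfying $p_i(0)=0$), with density set $\mathcal{S}$ equal to the rational primes splitting completely in the compositum $F$ of the residue extensions of $E/\mathbb{Q}(t)$ at the three branch points, a positive-density set by Chebotarev. This yields infinitely many $(x_0,y_0)\in\mathbb{Z}^2$ for which $x_0$, $x_0-ac\,y_0$ and $x_0-abn^n\,y_0$ are simultaneously primes in $\mathcal{S}$; for the associated $t_0=x_0/y_0$, Remark \ref{rem:combine_beckkln} then guarantees that every ramified prime of $E_{t_0}/\mathbb{Q}$ outside $\{a,b,c\}$ has cyclic decomposition group equal to its inertia. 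To pass from $S_n$ to $S_n^k$, I would iterate: having built $L_1,\dots,L_{j-1}$, tighten $\mathcal{S}$ to additionally demand complete splitting in $L_1\cdots L_{j-1}$ (still a positive-density condition) and combine Theorem \ref{thm:greentao} with the $S$-adically compatible Hilbert irreducibility of \cite[Prop.\ 2.1]{PV05} to produce a linearly disjoint $L_j$ whose ramified primes all split completely in $L_1\cdots L_{j-1}$; then the decomposition group of the compositum $L_1\cdots L_k$ at any ramified prime projects nontrivially to at most one factor and so remains cyclic and equal to inertia.

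\textbf{Anticipated obstacle.} The principal difficulty is not really the Green-Tao input but rather the second step: for each of the six values $n=5,\dots,10$ one must explicitly exhibit a quadruple $(m,a,b,c)$ satisfying the Diophantine identity, the congruence conditions and the $n-1$-roots-mod-$c$ condition, and then carefully audit the ramification at the three primes in $\{a,b,c\}$ in every integer specialization to confirm cyclicity of the decomposition groups. This is exactly what the Magma search preceding the theorem is designed to accomplish, and I expect the proof to conclude by tabulating one explicit admissible quadruple per value of $n$ and disposing of the three special primes case-by-case via a direct local computation.
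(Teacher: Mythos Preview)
Your $k=1$ outline is essentially the paper's argument (the paper in fact takes $m=1$, but the commented-out code you noticed does search over $m$). One point you gloss over: the primes $a,b,c$ are genuinely bad for the \emph{transformed} cover $\tilde E/\mathbb{Q}(t)$, so Theorems~\ref{thm:beck} and~\ref{thm:kln} cannot be invoked for $\tilde E$ at those primes. The paper's trick is to translate the specialization value back to the \emph{untransformed} cover $E/\mathbb{Q}(t)$, whose exceptional set contains only primes $\le n$, and apply Theorem~\ref{thm:kln}b) there; your congruence conditions on $a,b,c$ are then exactly the splitting conditions in the residue fields of $E$ needed to make Remark~\ref{rem:combine_beckkln} fire.

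Your passage to $S_n^k$, however, has a real gap. The three primes $a,b,c$ coming out of the computer search are fixed once and for all, and your own Step~2 shows they ramify (with $n$-cycle, $(m,n{-}m)$, transposition inertia respectively) in \emph{every} specialization $L_j$. It is therefore impossible to ``tighten $\mathcal{S}$'' so that all ramified primes of $L_j$ split completely in $L_1\cdots L_{j-1}$: the primes $a,b,c$ never do. In the compositum $L_1\cdots L_k$ the completion at, say, $a$ is then a compositum of $k$ totally tamely ramified $C_n$-extensions of $\mathbb{Q}_a$, and nothing in your iterative construction forces these local extensions to \emph{coincide}; if two of them differ, their compositum typically has Galois group $C_n\times C_d$ with $d>1$, destroying cyclicity. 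The paper circumvents this by a different device for $k>1$: it passes to the single regular $S_n^k$-extension defined by $\prod_{i=1}^k \tilde f(t-c_i,X)$, applies Tao--Ziegler once to all $3k$ branch-point forms simultaneously, and chooses the integer shifts $c_1,\dots,c_k$ mutually $p$-adically close for each $p\in\{a,b,c\}$. Krasner's lemma then guarantees that all $k$ component specializations have \emph{identical} completions at each bad prime, so the compositum at $p$ is still that same cyclic extension. This Krasner step at the shared bad primes is the missing idea in your outline.
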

\begin{proof}
Since the statement is known from Shafarevich's theorem for solvable groups (see Theorem \ref{thm:shaf}), we may assume $n\ge 5$.
First, let $k=1$.

{\it First step: Choice of a regular extension and its specialization.}
It is well known (and easy to see) that the splitting field of the polynomial $f(t,X) = X^n - t(X-1)$ over $\qq(t)$ yields a $\qq$-regular $S_n$-extension, ramified over $t\mapsto 0$, $t\mapsto \infty$ and one more rational point $t\mapsto \frac{n^n}{(n-1)^{n-1}}$, with inertia groups generated by an $n$-cycle, an $(n-1)$-cycle and a transposition respectively (see, e.g., \cite[p.42]{Serre} for this polynomial (up to linear transformation in $X$)). In fact, the discriminant of $f$ factors as $\Delta(f) = t^{n-1}\cdot((n-1)^{n-1}t - n^n)$ in $\zz[t]$. 
Denote the splitting field of $f$ by $E$. Via linear transformation, we can make all branch points of $E/\mathbb{Q}(t)$ integers, in the spirit of the proof of Theorem \ref{thm:crit}. Concretely, consider the polynomial
$\tilde{f}(t,X) = ab(n^n b+a(n-1)^{n-1})X^n-t(a(X-1)+bX^n)$ (with transcendentals $a,b$), which is the numerator of $f(\frac{at}{ab(n^n b+a(n-1)^{n-1}) - bt}, X)$.
Denote its splitting field by $\tilde{E}$, and note that for any non-branch point $t_0$ of $\tilde{E}/\mathbb{Q}(t)$, we have $\tilde{E}_{t_0}/\qq = E_{\frac{at_0}{ab(n^n b+a(n-1)^{n-1}) - bt_0}}/\mathbb{Q}$ by definition.

 One verifies that the discriminant $\tilde{\Delta}$ of the translated polynomial $\tilde{f}$ with respect to $X$ factors as 
 $$\tilde{\Delta}(t) = a^{n-1}b^{n-2}(n^n b+a(n-1)^{n-1})\cdot t^{n-1}(t-(n-1)^{n-1}a^2 - n^n ab)^{n-2}(t-n^n ab),$$ where the three linear factors in $t$ correspond to the branch points with inertia group $n$-cycle, $(n-1)$-cycle and transposition respectively. 
 
 We denote the factor $ab(n^n b+a(n-1)^{n-1})$ of $\tilde{\Delta}(t)$ by $Q(a,b)$. We fix integer values $a$ and $b$ and homogenize the three linear factors of $\tilde{\Delta}(t)$ to obtain polynomials $f_1(T,S)=T$, $f_2(T,S)=T-((n-1)^{n-1}a^2 - n^n ab)S$ and $f_3(T,S)=T-n^nabS \in \zz[T,S]$. We now claim that, for any choice of integer values $a$ and $b$, 
 there exist infinitely many $t_0\in \qq$ such that all ramified primes of $\tilde{E}_{t_0}/\qq$, apart from possibly the prime divisors of $Q(a,b)$, have cyclic decomposition group equal to the inertia group. Indeed, application of the Tao-Ziegler theorem (Theorem \ref{thm:greentao}) yields infinitely many $t_0=(x_0:y_0)\in \mathbb{P}^1(\mathbb{Q})$ at which all of $f_1,f_2$ and $f_3$ evaluate to primes which are completely split in the compositum of all residue extensions at branch points of $\tilde{E}/\mathbb{Q}(t)$.
Additionally, these primes may be assumed outside any prescribed finite set $S_0$ (in particular, outside of the prime divisors of $Q(a,b)$). As in the proof of Theorem \ref{thm:crit}, the decomposition groups at all these primes in $\tilde{E}_{t_0}/\mathbb{Q}$ are then cyclic and equal to the respective inertia subgroup. Due to Theorem \ref{thm:beck}, this inertia group is generated by an $n$-cycle, an $(n-1)$-cycle and a transposition respectively (for the three different primes). It is an elementary exercise in permutation group theory to show that the Galois group of $\tilde{E}_{t_0}/\mathbb{Q}$ must then be (a primitive permutation group containing a transposition, and hence) $S_n$.

On the other hand, all {\it further} ramified primes of $\tilde{E}_{t_0}/\mathbb{Q}$ must still divide the homogenization of $\tilde{\Delta}$, evaluated at $(x_0,y_0)$,
 and therefore in fact divide $Q(a,b)$. We deal with those primes in the next step.

{\it Second step: Controlling the ``bad" primes.}
Note that the prime divisors of $Q(a,b)$ are in fact bad primes (in the sense of Theorem \ref{thm:beck} and a fortiori Theorem \ref{thm:kln}) for $\tilde{E}/\mathbb{Q}(t)$, but not necessarily for $E/\mathbb{Q}(t)$!
Assume now that we can find distinct prime values $a,b$ such that $n^n b+a(n-1)^{n-1}$ is also a prime, distinct from $a$ and $b$, and such that none of the three primes is exceptional for $E/\mathbb{Q}(t)$ in the sense of Theorem \ref{thm:kln}.
 Since specializing 
 $t\mapsto t_0 = \frac{x_0}{y_0}$ in $\tilde{E}/\mathbb{Q}(t)$ corresponds to specializing $t\mapsto \frac{at_0}{ab(n^n b+a(n-1)^{n-1}) - bt_0}= \frac{a\cdot f_1(x_0)}{-b\cdot f_2(x_0,y_0)} =:\frac{\widehat{x_0}}{\widehat{y_0}}$
 in $E/\mathbb{Q}(t)$,
 Theorem \ref{thm:beck} shows that, for  $t_0\in \qq$ as in the first step, $a$ ramifies in $\tilde{E}_{t_0}$ with $n$-cycle inertia, $b$ with $(n-1)$-cycle inertia and $n^n b+a(n-1)^{n-1}$ with transposition inertia (the latter upon factorizing the expression $(n-1)^{n-1}\widehat{x_0} - n^n \widehat{y_0}$). Furthermore, by Theorem \ref{thm:kln}b), the residue extension at each such prime $p$ equals the residue at $p$ in the geometric residue extension at the respective branch point of $E/\qq(t)$.

But those latter residue extensions are quite explicit. They are $\qq(\zeta_n)$, $\qq(\zeta_{n-1})$ and  the splitting field of $f(\frac{n^n}{(n-1)^{n-1}},X)$. (This follows quickly from considering the normalizers of the respective inertia subgroups in $S_n$; e.g., the normalizer of the $n$-cycle subgroup in $S_n$ is $C_n\rtimes Aut(C_n)$, and since the $n$-th roots of unity must always be contained in the residue field at a ramification-index-$n$ branch point, this leaves only $\qq(\zeta_n)/\qq$ as the residue extension.)\\
One may now use computer search to find values of $a,b$ such that $a$, $b$ and $n^n b+a(n-1)^{n-1}$ are distinct primes, split in the respective residue extensions.
While existence of infinitely many such $(a,b)$ may depend on open conjectures, we easily found {\it some} values for all $n=5,\dots,10$ via computer search.
Concretely, $(n,a,b)\in \{(5,11,13),(6,7,151),(7,113,4027),\\(8, -359,-167),(9,541,3257),(10,21341, 21943)\}$ are possible choices.

It remains to justify that the above prime values are not in the exceptional set $S_0$ of Theorem \ref{thm:kln} for $E/\mathbb{Q}(t)$. This set can be made explicit with some extra effort; in fact, with our particular choice of polynomial $f(t,X)$, one can show that this set only contains prime numbers $\le n$. Since a full proof of this fact would lead us too far, we content ourselves with noting that for the finitely many, explicitly given values of those primes $p$ we treat here, the local behavior could alternatively be checked ad hoc with Magma.

{\footnotesize{\bf Corrigendum (July 2021)}: The values for $n=8$ given above are incorrect (and arose from a computer search in which, falsely, modulo conditions were imposed on $a$ and $b$ rather than on their absolute values). Instead, $a=-17$, $b=5209$ is a correct pair of values for $n=8$.}

{\it Third step: Linear disjointness.}
Finally, it remains to prove that all extensions $\tilde{E}_{t_0}/\qq$ obtained in this way can be assumed linearly disjoint. This is however immediate, since two non-linearly-disjoint $S_n$-extensions would have to have the same quadratic subextension, and this subextension is ramified at the numerator of $t_0$ (in case the $n$-cycle is not in $A_n$) or of $t_0-(n-1)^{n-1}a^2 - n^n ab$ (otherwise), whereas  two extensions obtained via our construction have only $a$, $b$ and $n^n b+a(n-1)^{n-1}$ as jointly ramified primes.
We may therefore assume the sets of ramified primes of all the quadratic subextensions to be distinct, ending the proof for $k=1$.

For arbitrary $k\in \nn$, fix prime numbers $a,b$ as found above, and consider the splitting field $\tilde{E}$ of $\prod_{i=1}^k \tilde{f}(t-c_i, X)$ over $\qq(t)$, with constants $c_1,\dots,c_k\in \zz$. With suitable choice of $c_1,\dots,c_k$, one can of course make the branch point loci of the splitting fields of $\tilde{f}(t-c_i, X)$ (call them $\tilde{E}_i$) pairwise disjoint, whence this defines a $\qq$-regular $G^k$-extension with a total of $3k$ branch points $t\mapsto t_j$, all rational integers. Using once again Theorem \ref{thm:greentao}, there are infinitely many specializations $t\mapsto t_0 = \frac{m_0}{n_0}\in \qq$ such that the values $m_0-t_jn_0$ are simultaneously prime, and all these primes are completely split in the compositum of all residue extensions at branch points of $\tilde{E}/\qq(t)$.
\footnote{Here, we use that Theorem \ref{thm:greentao} is applicable for arbitrary subsets of the primes of positive density. This is not the case for Theorem \ref{thm:taoziegler}, which is the reason why our construction in Theorem \ref{thm:loc_cyclic} in the following section, at least a priori, does not allow a generalization to direct products.} 
By Remark \ref{rem:combine_beckkln}, the primes $m_0-t_jn_0$ ramify in the specialization $\tilde{E}_{t_0}/\qq$ of $\tilde{E}/\qq(t)$, with decomposition group equal to the inertia group. 
 The property $\Gal(\tilde{E}_{t_0}/\qq) = S_{n}^k$ is automatic under these conditions, since the inertia groups at ramified primes obtained in this way include an $n$-cycle, an $(n-1)$-cycle and a transposition inside every single of the $k$ copies of $S_n$, i.e., a set of elements which together generate all of $S_n^k$.

 Restriction to the individual extensions $\tilde{E}_i/\qq(t)$ shows that the only further ramified primes in $\tilde{E}_{t_0}/\qq$ are $p\in \{a, b, n^n b+a(n-1)^{n-1}\}$. If we can ensure that, for all these $p$, the local extensions $(\tilde{E}_i)_{t_0}\cdot \qq_p/\qq_p$ are the same for all $i$, then we are done, since the completion of the compositum of all the $(\tilde{E}_i)_{t_0}/\qq$ is then still the same (cyclic) extension. But note that the primes $a$, $b$, and  $n^n b+a(n-1)^{n-1}$ are fixed independently of $i\in \{1,\dots, k\}$. If we demand that the integers $c_1,\dots, c_k$ are sufficiently close to each other $p$-adically for all $p\in  \{a, b, n^n b+a(n-1)^{n-1}\}$ (which is without loss of generality), then Krasner's lemma yields immediately that the completions of $(\tilde{E}_i)_{t_0}/\qq$ at $p$ are all identical.
This completes the proof.
\end{proof}

A minor blemish in the previous result is the existence of primes which are ramified in all the infinitely many extensions constructed. 
In Section \ref{sec:embed}, we will present a slight variant of the construction which gets rid of this (however, it will not a priori be compatible with taking direct products).

\subsection{Locally cyclic extensions and embedding problems}
\label{sec:embed}

Here we vary our construction of the previous section slightly, to deduce the existence of locally cyclic extensions for certain central extensions of $S_5$. From Lemma \ref{rmk:split} and Theorem \ref{thm:dirprod}, this is automatic for certain split extensions (namely, such with nilpotent kernel). We therefore investigate what can be said about non-split extensions. We begin with a statement essentially regaining the result of Theorem \ref{thm:dirprod} for the special case $G=S_5$, although with certain useful extra conditions, and we also include the group $G=PGL_2(7)$.

\begin{theorem}
\label{thm:loc_cyclic}
Let $G=S_5$ or $G=PGL_2(7)$.
Let $S_1$ be a finite set of prime numbers, and $S_2$ a finite set of ``sufficiently large" prime numbers (i.e., disjoint from some fixed finite set depending only on $G$).
For each $p\in S_2$, pick a conjugacy class $C(p)$ of $G$. Then there exist infinitely many linearly disjoint Galois extensions $K/\qq$ fulfilling each of the following:
\begin{itemize}
\item[a)] $\Gal(K/\mathbb{Q}) \cong G$,
\item[b)] $K/\mathbb{Q}$ is locally cyclic,
\item[c)] All $p\in S_1\cup S_2$ are unramified inside $K$,
\item[d)] For each $p\in S_2$, the Frobenius of $p$ inside $K/\mathbb{Q}$ is in the conjugacy class $C(p)$.
\end{itemize}
\end{theorem}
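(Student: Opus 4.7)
\emph{Plan.} The strategy refines the construction of Theorem~\ref{thm:dirprod} so as to additionally enforce unramifiedness at $S_1\cup S_2$ and the prescribed Frobenius behavior at $S_2$. I would start with a $\qq$-regular $G$-extension $E/\qq(t)$ having three rational branch points of pairwise distinct ramification indices: for $G=S_5$, take the splitting field of $X^5-t(X-1)$ as in Theorem~\ref{thm:dirprod}; for $G=PGL_2(7)$, take a Malle-Matzat-type $(2,6,7)$-realization. Applying the linear transformation of Theorem~\ref{thm:dirprod} with parameters $a,b$ chosen as large distinct primes outside $S_1\cup S_2$, outside the exceptional set of Theorem~\ref{thm:kln}, and split in the respective residue extensions at the branch points (with the derived auxiliary prime $n^n b+a(n-1)^{n-1}$ --- or the analog for $PGL_2(7)$ --- also of this type), moves the branch points to integers $t_1=0,t_2,t_3\in\zz$ and ensures that these auxiliary ``bad primes'' contribute only cyclic decomposition groups to any resulting specialization.

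To enforce the local conditions at $S_1\cup S_2$, let $N:=\prod_{p\in S_1\cup S_2}p$ and use CRT to pick $(A,B)\in\zz^2$ so that, for each $p\in S_1\cup S_2$, the projective residue of $(A:B)$ in $\mathbb{P}^1(\mathbb{F}_p)$ avoids $\{t_1,t_2,t_3\}\bmod p$ --- which guarantees $p\nmid f_i(x_0,y_0)=x_0-t_i y_0$ for all $x_0\equiv A,\ y_0\equiv B\pmod{N}$ and hence, by Theorem~\ref{thm:beck}, unramifiedness at $p$ --- and additionally, for each $p\in S_2$, $A/B\bmod p$ lies in the admissible residue set of density $\geq 1/c$ provided by Theorem~\ref{thm:debesghazi} that forces Frobenius class $C(p)$. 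Feasibility is immediate for $p\in S_2$ sufficiently large, since excluding the three branch classes costs only density $3/p$. Then apply Tao-Ziegler (Theorem~\ref{thm:greentao}), in the refinement allowing the arithmetic-progression restriction $(x_0,y_0)\in(A+N\zz)\times(B+N\zz)$, requiring that $f_1,f_2,f_3$ simultaneously take prime values in the positive-density Chebotarev set $\mathcal{S}$ of primes splitting completely in the compositum of all residue extensions at branch points of $E/\qq(t)$.

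For each resulting $(x_0,y_0)$, set $t_0:=x_0/y_0$ and $K:=E_{t_0}$. Properties (c) and (d) follow directly from the CRT choice, while $\Gal(K/\qq)=G$ follows from Hilbert irreducibility together with the inertia-generation argument at the newly ramified primes (a transposition together with further inertia cycles suffices for $S_5$, analogously for $PGL_2(7)$). The locally cyclic property (b) is obtained as follows: every ramified prime of $K/\qq$ either lies in $\{a,b,n^n b+a(n-1)^{n-1}\}$ (handled in the choice of $a,b$) or is a prime divisor of some $f_i(x_0,y_0)$, which by the splitting hypothesis on $\mathcal{S}$ and Theorem~\ref{thm:kln}(b) combined with Remark~\ref{rem:combine_beckkln} has decomposition group equal to the (cyclic) inertia group $I_i$. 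Linear disjointness of an infinite family follows as in step 3 of the proof of Theorem~\ref{thm:dirprod}: the ramification loci outside $\{a,b,n^n b+a(n-1)^{n-1}\}\cup S_1\cup S_2$ differ between distinct specializations, and the unique index-$2$ subfield of each (exploiting $A_5\le S_5$ or $PSL_2(7)\le PGL_2(7)$) is then ramified at different primes.

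The principal technical obstacle lies in justifying the combined use of an arithmetic-progression restriction and a Chebotarev positive-density restriction in Tao-Ziegler; while this is not literally included in the simplified statement of Theorem~\ref{thm:greentao}, it is a standard refinement of the original Tao-Ziegler machinery, where the local factors at primes dividing $N$ are forced to $1$ by the CRT choice of $(A,B)$, so that the global prime count reduces to the usual case up to a positive-density multiplicative constant.
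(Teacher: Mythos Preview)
Your approach diverges from the paper's in an essential way, and the divergence is precisely where your acknowledged ``principal technical obstacle'' lies---and it is a real gap, not a routine refinement.

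The paper does \emph{not} carry over the $a,b$-transformation from Theorem~\ref{thm:dirprod}, and it does \emph{not} use Tao--Ziegler (Theorem~\ref{thm:greentao}). Instead it works directly with $f(t,X)=X^5-t(X-1)$ (resp.\ the explicit $PGL_2(7)$-polynomial) and applies Green--Tao (Theorem~\ref{thm:taoziegler}) to three concrete affine-linear forms. The key observation you are missing is that the Chebotarev conditions needed for cyclic decomposition groups at the three ramified primes can all be rewritten as \emph{congruence} conditions: the residue extensions at the $5$-cycle and $4$-cycle branch points are the abelian fields $\qq(\zeta_5)$ and $\qq(\zeta_4)$, and at the transposition branch point one only needs the prime to split in the quadratic subfield $\qq(\sqrt{-2})$ of the $S_3$-residue extension (so that $D/I\le C_3$ and hence $D$ is cyclic). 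Thus ``prime $\equiv 1\bmod 5$'', ``prime $\equiv 1\bmod 4$'', ``prime $\equiv 1$ or $3\bmod 8$'' suffice. The D\`ebes--Ghazi conditions for $S_2$ are likewise congruence conditions on $t_0/s_0$. Everything is then absorbed into the constant terms of affine-linear forms, and Green--Tao applies with no positive-density set needed.

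Your route instead requires Tao--Ziegler with \emph{both} a positive-density Chebotarev target set $\mathcal{S}$ \emph{and} a congruence restriction $(x_0,y_0)\in (A+N\zz)\times(B+N\zz)$. But Theorem~\ref{thm:greentao} demands $p_i(0)=0$; substituting $y_0=B+NY$ gives $p_i(Y)=-t_iB-Nt_iY$ with $p_i(0)=-t_iB\neq 0$ in general, so the hypothesis fails. One cannot repair this by pushing the congruences into $\mathcal{S}$, since the three forms land in three \emph{different} residue classes mod $N$, while Theorem~\ref{thm:greentao} uses a single $\mathcal{S}$. Nor can one switch to Green--Tao, because that theorem does not accept positive-density subsets of the primes (this is exactly the trade-off the paper flags in the footnote to the proof of Theorem~\ref{thm:dirprod} and in Remark~\ref{rem:all_sn}). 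So the ``standard refinement'' you invoke is not available in the cited literature, and your proof does not close without it. A secondary cost of your route is that it reintroduces the fixed auxiliary ramified primes $a,b,n^nb+a(n-1)^{n-1}$, which the paper's direct approach avoids entirely.
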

\begin{proof}
We will construct the extensions $K/\mathbb{Q}$ as specializations of some $\qq$-regular $G$-extension $E/\mathbb{Q}(t)$. We begin by noting that it then suffices to prove statements b), c) and d). Indeed, to obtain that the Galois group of the thus constructed specialization equals the full group $G$, it suffices to enlarge the set $S_2$ such that every conjugacy class of $G$ occurs as $C(p)$ for some $p\in S_2$. The Galois group must then be a subgroup of $G$ intersecting each conjugacy class of $G$ non-trivially, which forces $\Gal(K/\mathbb{Q})\cong G$ by a classical theorem of Jordan (\cite{Jor}).

We begin with the case $G=S_5$.
As in (the case $n=5$ of) the proof of Theorem \ref{thm:dirprod}, consider the polynomial $f(t,X) = X^5 - t(X-1)$ and let $E$ be its splitting field. The homogenized discriminant of $f$ (with respect to $X$) is $T^4 \cdot S^3 \cdot (256 T - 3125 S) \in \zz[T,S]$, and the inertia groups at $t\mapsto 0$, $t\mapsto\infty$ and $t\mapsto 3125/256$ are generated by a $5$-cycle, $4$-cycle and transposition respectively.

We claim that, in order to satisfy conditions b) and c), it suffices to find infinitely many $(t_0:s_0)\in \mathbb{P}^1(\qq)$ such that
\begin{itemize}
\item[i)] $t_0$ is a prime $\equiv 1$ mod $5$.
\item[ii)] $s_0$ is a prime $\equiv 1$ mod $4$.
\item[iii)] $256t_0-3125s_0$ is a prime $\equiv 1$ or $\equiv 3$ mod $8$. 
\end{itemize}
Indeed, using again Theorem \ref{thm:kln}, condition i) ensures that the decomposition group at $t_0$ (in the splitting field of $f(t_0/s_0,X)$) is cyclic of order $5$, ii) ensures that the decomposition group at $s_0$ is cyclic of order $4$, and iii) ensures that $256t_0-3125s_0$ splits completely in $\qq(\sqrt{-2})$ which is the unique quadratic subextension of the ($S_3$)-residue extension at $t\mapsto 3125/256$ in the splitting field of $f(t,X)$. Since the decomposition group at $256t_0-3125s_0$ is contained in the normalizer of $(1,2)$ (up to conjugation), this prime  then has inertia group generated by $(1,2)$ and decomposition group a subgroup of $\langle(1,2), (3,4,5)\rangle \cong C_6$.

Since $256t_0-3125s_0$ is congruent to $1$ mod $8$ as soon as $s_0\equiv 1$ mod $8$, it is sufficient to show that the affine linear forms 
\begin{equation}
\label{eq:forms}
f_1(T) = 5T+1, f_2(S) = 8S+1 \text{ and } f_3(T,S) = 256(5T+1) - 3125(8S+1) 
\end{equation} 
take infinitely many simultaneously prime values (which also can be assumed to be outside of the finite set $S_1\cup S_2$, ensuring assertion c)). Since the product of the three forms has no fixed prime divisor (as, e.g., already seen by evaluating $(T,S)$ to $(0,0)$ and to $(0,1)$), and no two of them are affinely dependent, 
 this follows directly from Green-Tao's theorem (Theorem \ref{thm:taoziegler}).

To ensure that additionally assertion d) holds, we make use of Theorem \ref{thm:debesghazi} for $E/\mathbb{Q}(t)$.
 We then obtain that, for every sufficiently large (depending on $E$) prime $p$ and any class $C$ of $G$, there exists an integer $k(p)$, coprime to $p$, such that every specialization $E_{a_0}/\mathbb{Q}$ with a rational number $a_0=\frac{t_0}{s_0}\equiv k(p)$ mod $p$ has Frobenius class $C$ at the prime $p$. 
Denoting by $R$ the product of all primes in $S_2$ and using the Chinese remainder theorem, it therefore suffices to add the conditions $t_0\equiv k$ mod $R$
and $s_0\equiv 1$ mod $R$, for some prescribed coprime residue $k$ mod $R$, to the above conditions i), ii), iii) and verify that the assumptions of Green-Tao's theorem are still fulfilled. Indeed, we only need to replace $T$ by $RT+k$ and $S$ by $RS+1$ in \eqref{eq:forms}, i.e., show that the three forms 
$\tilde{f_1}(T) := f_1(RT+k) = 5RT + 5k+1$, $\tilde{f_2}(S) := f_2(RS+1) = 8RS+9$ and  $\tilde{f_3}(T,S) = 256\tilde{f_1}(T) - 3125\tilde{f_2}(S)$
 still take infinitely many simultaneously prime values. 

 These three forms are still obviously pairwise affinely independent, and it only remains to show that we can furthermore assume the nonexistence of a fixed prime divisor.
However, since we only introduced mod-$p$ conditions for $p\in S_2$, such a fixed prime divisor could only be an element of $S_2$. Evaluating the product of the three forms at $(0,0)$ gives $\tilde{f_1}(0)\tilde{f_2}(0)\tilde{f_3}(0,0) = 9(5k+1)(256(5k+1)-9\cdot 3125)$ and we only need to ensure that $k$ as above can be chosen such that this expression is coprime to $p\in S_2$.
%
%
%
However, recall that the mod-$p$ residue class of $k$ is the same as the one of $k(p)$. By Theorem \ref{thm:debesghazi}, there is a positive constant $c$ (independent of $p$) such that at least $p/c$ residue classes mod $p$ are admissible, and assuming without loss that $p>5$, one sees that only two of those residue classes make the product $\tilde{f_1}(0)\tilde{f_2}(0)\tilde{f_3}(0,0) = 9(5k+1)(256(5k+1)-9\cdot 3125)$ congruent to $0$ mod $p$. 
This completes the case $G=S_5$.

Now let $G=PGL_2(7)$. Consider the polynomial $f(t,X) = X^7(X+7)-t(X^2+X+7)$. This has Galois group $PGL_2(7)$ over $\mathbb{Q}(t)$ (see, e.g., \cite[Proposition 9]{Koe_Mon} for a polynomial identical to $f$ up to linear transformation)  and 
homogenized discriminant (over $\zz$) $T^6(108T + 7^7S)^3 \cdot S^5$. The branch points are exactly $t\mapsto 0$, $t\mapsto \infty$ and $t\mapsto -\frac{7^7}{108}$, with inertia groups generated by a $7$-cycle, a $6$-cycle and a triple transposition respectively. The corresponding residue fields are $\qq(\zeta_7)$, $\qq(\zeta_6)$ and the splitting field of $X^3+7X^2+49/3 X + 343/6$ (the last splitting field is obtained from evaluating $t\mapsto -\frac{7^7}{108}$ in $f(t,X)$, factoring the resulting polynomial, and noting that the normalizer of a triple transposition in $PGL_2(7)$ has order $12$ (hence the residue extension cannot possibly have degree more than $6$)).
We thus want to find $(t_0:s_0)\in \mathbb{P}^1(\qq)$ such that 
\begin{itemize}
\item[i)] $t_0$ is a prime $\equiv 1$ mod $7$.
\item[ii)] $s_0$ is a prime $\equiv 1$ mod $6$.
\item[iii)] $108t_0+7^7s_0$ is a prime $\equiv 1$ mod $6$. 
\end{itemize}
Note here that condition iii) is because the quadratic subextension of the splitting field of $X^3+7X^2+49/3 X + 343/6$ equals $\qq(\sqrt{-3})$; therefore, as soon as $108t_0+7^7s_0 = p$ is a prime $\equiv 1$ mod $6$, the inertia group at $p$ in the specialization is of order $2$ and the quotient of decomposition by inertia group is of order dividing $3$, implying cyclic decomposition group.

Now one may easily verify that application of Green-Tao's theorem yields locally cyclic extensions with properties c) and d), just like in the first case.
\end{proof}

\begin{remark}
\label{rem:all_sn}
To get the full assertion of Theorem \ref{thm:loc_cyclic}, we required extensions of $\mathbb{Q}(t)$ given by a polynomial $f\in \zz[t,X]$ whose discriminant (as an element of $\zz[t]$) has no non-trivial constant divisor $d\in \zz$. This restriction meant that we could not reduce (as in Theorem \ref{thm:crit}) without loss to a situation in which Theorem \ref{thm:greentao} is applicable, having to resort instead to Theorem \ref{thm:taoziegler}. In particular, since Theorem \ref{thm:taoziegler} does not generalize to arbitrary positive density sets of primes, it is not trivial to generalize our construction from $S_5$ to arbitrary $S_n$, even though our choice of polynomial generalizes very naturally to the $S_n$-polynomial $X^n-t(X-1)$. In trying to do so, the reader may verify that conditions i) to iii) in the proof would be replaced by the following requirements:

\begin{itemize}
\item[i)] $t$ is a prime congruent $1$ modulo $n$,
\item[ii)] $s$ is a prime congruent $1$ modulo $n-1$,
\item[iii)] $(n-1)^{n-1}t - n^ns$ is a prime which splits completely in the splitting field of the polynomial $1+2X+3X^2 + \dots + (n-1)X^{n-2}$.
\end{itemize}

It would be intriguing to know whether an analog of Green-Tao holds for this configuration. This leads to several interesting problems, one of which is the question what the Galois group of $1+2X+3X^2 + \dots + (n-1)X^{n-2}$ is. If it were a group containing a large abelian quotient, then the ``completely split" condition could yield modulo-restrictions which might be incompatible with the representation $(n-1)^{n-1}t - n^ns$. From numerous computer calculations, it however seems that this Galois group is always $S_{n-2}$ or $A_{n-2}$ (except for $n=8$, where it is $PGL_2(5)$!), and the only abelian subextension generated by the square-root of the discriminant of our polynomial would at least pose no local obstruction. The strict verification of the Galois group for all $n$ seems a difficult task, although it can be proven for certain infinite families of values $n$.\footnote{See e.g. the mathoverflow discussion at \\ \texttt{https://mathoverflow.net/questions/44844/galois-groups-of-a-family-of-polynomials}.}
\end{remark}

We now present our application to central embedding problems.
Recall that for $n\ge 5$, $S_n$ has two Schur covers of the form $2.S_n$ - one in which the class of transpositions in $S_n$ is split, and one in which it is not.

\begin{theorem}
\label{thm:centr_ext}
Let $G=S_5$ and let $\Gamma = Z.S_5$ be any central extension of $G$ in which the conjugacy class of transpositions is split. Then there are infinitely many pairwise linearly disjoint locally cyclic $\Gamma$-extensions of $\mathbb{Q}$.
%
\end{theorem}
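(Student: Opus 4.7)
The plan is to construct each $\Gamma$-extension as a proper solution of the central embedding problem $1 \to Z \to \Gamma \to S_5 \to 1$ starting from a suitable locally cyclic $S_5$-extension $K/\mathbb{Q}$ produced by Theorem \ref{thm:loc_cyclic}. The key observation is that in such an $S_5$-extension the only ramified primes are the three produced by the construction, with cyclic decomposition groups of order $5$, $4$, and (a divisor of) $6$ respectively; the local embedding problem is thus automatically solvable at every unramified prime and reduces to a controllable question at the three ramified ones. The hypothesis that transpositions split in $\Gamma$ kills the local obstruction at the transposition-ramified prime, and appropriate additional congruence conditions will handle the remaining two.

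Concretely, I would apply Theorem \ref{thm:loc_cyclic} with $S_1$ containing all rational primes dividing $|Z|$, so that $K/\mathbb{Q}$ is unramified at these primes (a Scholz-type condition relative to the kernel $Z$). Simultaneously, I would strengthen the congruence requirements on the parameters $t_0$ and $s_0$ appearing in the proof of Theorem \ref{thm:loc_cyclic}, imposing for example $t_0 \equiv 1 \pmod{5|Z|}$ and $s_0 \equiv 1 \pmod{4|Z|}$. These congruences ensure that the local cyclic extensions of $\mathbb{Q}$ at $t_0$ and $s_0$ embed into local cyclic extensions of degree $5|Z|$ and $4|Z|$ respectively, which trivializes the local obstructions at these two primes; and they remain compatible with the Green-Tao theorem exactly as in the original argument, since they merely restrict the prime values that the linear forms $f_i$ must take to suitable positive-density residue classes. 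At the finitely many auxiliary primes $p \in S_2$, condition d) of Theorem \ref{thm:loc_cyclic} allows us to prescribe Frobenius classes that lift to cyclic subgroups of $\Gamma$ on which the central extension is already split.

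Having arranged local solvability of the central embedding problem at every place together with the Scholz-type condition at primes dividing $|Z|$, I would then invoke the standard theory of central embedding problems (cf.\ \cite[Chapter IV.10]{MM}) to produce a proper solution field $L \supset K$ with $\Gal(L/\mathbb{Q}) \cong \Gamma$. Each completion of $L$ is the compositum of a cyclic completion of $K$ with the local cyclic cover prescribed by the congruences above, so $L/\mathbb{Q}$ remains locally cyclic. The infinitude and pairwise linear disjointness of such $L/\mathbb{Q}$ is inherited directly from the corresponding properties of the $K/\mathbb{Q}$. The main obstacle is ensuring that all these imposed local conditions are simultaneously compatible with the Green-Tao-based specialization argument; this ultimately works because the imposed congruences restrict $t_0, s_0$ to a finite union of residue classes still satisfying the positive-density and no-fixed-prime-divisor hypotheses of Green-Tao.
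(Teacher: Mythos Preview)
Your overall strategy---start from a locally cyclic $S_5$-extension $K/\mathbb{Q}$ furnished by Theorem~\ref{thm:loc_cyclic} and then solve the central embedding problem over it---matches the paper's. However, two genuine gaps remain.

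First, you never address the archimedean place. The obstruction to a central embedding problem with kernel $C_2$ lives in $H^2(G_\mathbb{Q},C_2)$, and the local--global principle one uses includes the infinite prime; complex conjugation in $K/\mathbb{Q}$ lies in some conjugacy class of $S_5$, and the local problem over $\mathbb{R}$ is solvable only if that class splits in $\Gamma$. The paper arranges this by invoking Remark~\ref{rmk:gt} to make $t_0/s_0$ large and positive, so that the defining quintic has exactly three real roots and complex conjugation is a transposition---precisely the class assumed split. Your congruences on $t_0,s_0$ say nothing about the real behaviour. (Relatedly, the paper first reduces via Lemma~\ref{rmk:split} to a Frattini extension and then to an embedding problem $1\to C_2\to\Gamma\to S_5\times C_{2^k}\to 1$ with kernel $C_2$; without this reduction there is no local--global principle available for general $Z$, so your direct treatment of arbitrary $Z$ is also incomplete.)

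Second, and more seriously, the claim that ``each completion of $L$ is the compositum of a cyclic completion of $K$ with the local cyclic cover prescribed by the congruences above'' is not justified. Solving a global central embedding problem does not let you impose the local solutions independently: the global solutions form a torsor under $\mathrm{Hom}(G_\mathbb{Q},Z)$, and a particular global solution may have non-cyclic decomposition group at a ramified prime even though a cyclic local solution exists there. This happens precisely at the transposition-prime $p_3$, where the preimage in $\Gamma$ of the decomposition group is $C_2\times C_2$ or $C_2\times C_6$. The paper devotes its entire third step to this difficulty, twisting the solution field by $\mathbb{Q}(\sqrt{q})$ for a prime $q$ chosen split in $KL$ but a non-residue modulo $p_3$, thereby forcing the primes above $p_3$ to split in the kernel, and then verifying that the twist does not destroy cyclicity at $p_1$, $p_2$, or at the newly ramified prime $q$. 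Your blanket appeal to \cite[Chapter IV.10]{MM} does not replace this: the Scholz machinery there requires, among other things, that decomposition groups equal inertia groups at every ramified prime of $K/\mathbb{Q}$, which fails at $p_3$ (decomposition group possibly of order $6$, inertia of order $2$).
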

\begin{proof}
{\it First step: Basic setup}.\\
It is well-known (e.g., \cite[Chapter IV, Theorem 5.1]{MM}) that every central extension can be decomposed into a so-called central {\it Frattini} extension (i.e., with kernel contained in the Frattini subgroup), followed by a split central extension. Since the latter are covered by Lemma \ref{rmk:split}, we may restrict to the former.
Since the Schur multiplier of $S_5$ is of order $2$, as a special case of \cite[Chapter IV, Theorem 6.9]{MM} (and its proof), one obtains that every central Frattini extension of $S_5$ is of the form $\Gamma = Z.S_5$ with $Z$ a cyclic $2$-group. More precisely, one has an exact sequence
$1\to C_2 \to \Gamma \to S_5\times C_{2^k} \to 1$, with $k\in \mathbb{N}_0$.

Let $K/\mathbb{Q}$ be an $S_5$-extension as constructed in the proof of Theorem \ref{thm:loc_cyclic}. Recall that, by construction, $K/\mathbb{Q}$ has the following properties:
\begin{itemize}
\item[1)] Apart from infinity, exactly three primes $p_1, p_2, p_3$ of $\mathbb{Q}$ ramify in $K/\mathbb{Q}$.
\item[2)] The inertia group generators at $p_1$, $p_2$ and $p_3$ are (in this order) a $5$-cycle, a $4$-cycle and a transposition, and the decomposition groups are cyclic of order $5$, $4$ and dividing $6$.
\item[3)] $p_2\equiv 1$ mod $8$. 
\end{itemize}

We claim that we may additionally demand 
\begin{itemize}
\item[4)] Complex conjugation in $K/\mathbb{Q}$ acts as a transposition in $S_5$.
\end{itemize}
Indeed, the extension $K/\mathbb{Q}$ is constructed as the splitting field of a polynomial $X^5 - \frac{t_0}{s_0}(X-1)$, with the only requirement that certain linear forms in two variables become simultaneously prime at $(t_0,s_0)$. Due to Remark \ref{rmk:gt}, one may choose $\frac{t_0}{s_0}$ arbitrarily large and positive, and it is then elementary to verify that the above polynomial has exactly three real roots.

Next, let $L/\mathbb{Q}$ be a $C_{2^k}$-extension with the following properties:
\begin{itemize}
\item[i)] All primes ramifying in $K/\mathbb{Q}$ split completely in $L/\mathbb{Q}$.
\item[ii)] All primes ramifying in $L/\mathbb{Q}$ are completely split in $K(\zeta_{2^{k+1}})/\mathbb{Q}$.
\end{itemize}
Of course, such $L$ exists, e.g., as a very special case of Shafarevich's method for solving split embedding problems with nilpotent kernel (namely, kernel $C_{2^k}$), cf.\ \cite[Theorem 9.6.7]{NSW}, 
and the compositum $LK/\mathbb{Q}$ is then locally cyclic.

{\it Second step: Solving an embedding problem}.\\
We set $H=\Gal(LK/\mathbb{Q})\cong S_5\times C_{2^k}$, fix an epimorphism $\varphi:G_{\mathbb{Q}}\to H$ and consider the embedding problem $(\epsilon, LK/\mathbb{Q})$ given by
\[
  \begin{diagram}\dgARROWLENGTH=1.6em
    \node{}
    \node{}
    \node{}
    \node{G_\mathbb{Q}} \arrow{s,r} {\varphi}
    \node{}\\
    \node{(\epsilon) : 1} \arrow{e}
    \node{C_2} \arrow{e}
    \node{\Gamma} \arrow{e,l} {j}
    \node{H} \arrow{e}
    \node{1}
  \end{diagram}
  \]

We first show that $(\epsilon, LK/\mathbb{Q})$ is solvable. Indeed, for central extensions $2.H$, there is a local-global principle (cf.\ e.g., \cite[Lemma 2.1.5]{Serre}): it suffices to to show solvability of the induced embedding problem over $\mathbb{Q}_p$ for all primes $p$ (including the infinite one). This is due to the Brauer-Hasse-Noether theorem, yielding the injectivity of the map 
$H^2 (G_\qq, C_2) \to \prod_{p} H^2( G_{\qq_p}, C_2)$ (where $G_\qq$ denotes the absolute Galois group of $\qq$, and the product is over all primes, including infinity).
For primes unramified in $KL/\mathbb{Q}$, the induced local embedding problem is automatically solvable. We may therefore restrict to the ramified primes.

Since all those primes split completely in $L/\mathbb{Q}$ by condition i), it suffices to solve the local embedding problems from $\Gal(K/\mathbb{Q})$ to $2.S_5$. First, consider the infinite prime. 
Due to Condition 4), the inertia group of $K/\mathbb{Q}$ at infinity is therefore generated by a transposition, and since the class of transpositions is split in $\Gamma$, the embedding problem over $\mathbb{R}$ is solvable.
As for the three finite primes ramifying in $K/\mathbb{Q}$, $p_1$ has cyclic decomposition group of order $5$, which of course also splits in $\Gamma$. The decomposition group at $p_2$ is cyclic and generated by a $4$-cycle. This class is non-split in $\Gamma$ (i.e., the corresponding elements in $\Gamma$ have order $8$). However, since we have $p_2\equiv 1$ mod $8$, 
 $\mathbb{Q}_{p_2}$ has totally ramified Galois extensions of order $8$, and thus the embedding problem over $\mathbb{Q}_{p_2}$ is solvable as well. Finally, $p_3$ had inertia group generated by a transposition, and cyclic decomposition group of order either $2$ or $6$. Once again, since the class of transpositions splits in $\Gamma$, the preimage of the decomposition group in $\Gamma$ is $C_2\times C_2$ or $C_2\times C_6$, and once again the induced local embedding problem is solvable.

Next, for the primes $p$ ramified in $L/\mathbb{Q}$, due to condition ii) it suffices to solve a local embedding problem of subgroups of $C_{2^{k+1}}$, which is possible due to Kummer theory, since those $p$ were chosen such that $\qq_p$ contains the $2^{k+1}$-th roots of unity.

{\it Third step: Locally cyclic solutions}.\\
We now need to make sure that the solutions to our embedding problem can still be chosen locally cyclic. However, by \cite[Prop.\ 2.1.7]{Serre}, the solution may be chosen in the following way: If $(\psi_p)_p$ is a collection of local solutions at all primes, unramified apart from finitely many $p$, then there is a global solution $\psi$ ramifying only at those $p$ for which $\psi_p$ is ramified (and with the same inertia group). We may therefore choose solutions unramified outside the set of primes ramifying in $KL/\mathbb{Q}$. We then need to ensure cyclic decomposition groups only at those (finite) primes. 
For those primes ramifying in $L/\mathbb{Q}$, this is automatic by condition ii). Consider therefore the three finite primes ramifying in $K/\mathbb{Q}$.
For $p_1$ and $p_2$, nothing needs to be shown, since by construction the decomposition group is contained in a cyclic subgroup of order $10$ and $8$ respectively. For  $p_3$, we need to ensure that the a priori possible decomposition groups $C_2\times C_2$ or $C_2\times C_6$ do not occur, which amounts to ensuring that the primes of $KL/\mathbb{Q}$ extending $p_3$ split completely in the solution field $F$. Assume therefore on the contrary that they remain inert in $F$. We then change the solution field by twisting with a suitable quadratic extension $\mathbb{Q}(\sqrt{q})/\qq$, linearly disjoint from $F/\mathbb{Q}$, such that $p_3$ remains inert in $\mathbb{Q}(\sqrt{q})$. Then $F(\sqrt{q})/\qq$ has Galois group $\Gamma \times C_2$, and the fixed field $F'$ of the diagonal subgroup $C_2$ has $\Gal(F'/\mathbb{Q}) = \Gamma$, with $K\subset F'$. Furthermore by construction, the primes extending $p_3$ in $KL$ split in $F'$, and due to the above arguments, the decomposition groups in $F'/\mathbb{Q}$ at the further ramified primes of $K/\mathbb{Q}$ are still cyclic. It remains to control the prime $q$ itself, since by construction, the inertia group at $q$ in $F'/\mathbb{Q}$ is the central subgroup $C_2$ of $\Gamma$. We are done if we can ensure that $q$ was totally split in $KL$ (since then its decomposition group in $F'/\mathbb{Q}$ is cyclic of order $2$). So our requirements amount to
$$\left(\frac{q}{p_3}\right) = -1, \ Frob_q(KL/\mathbb{Q}) = 1.$$
Assuming additionally $q\equiv 1$ mod $4$, we obtain the sufficient condition $Frob_q(KL(i)/\mathbb{Q}) = 1$ and $Frob_q(\mathbb{Q}(\sqrt{p_3})) \ne 1$. This can clearly be fulfilled as long as $\mathbb{Q}(\sqrt{p_3})$ and $KL(i)$ are linearly disjoint over $\mathbb{Q}$. The latter is indeed the case by the construction of $K$ and $L$; namely, in the quadratic subextension of $K$, not only $p_3$, but also a further prime $p_2 (\equiv 1$ mod $8$) ramifies, whence $\sqrt{p_3}\notin K(i)$. In particular, $KL(i)$ has no quadratic subextension in which $p_3$ is the only odd ramified prime. This concludes the proof.
\end{proof}

\begin{remark}
\begin{itemize}
\item[a)]
Unfortunately, an analogous result for the second Schur cover $2.S_5$ (the one in which the class of transpositions does not split) cannot be deduced from the construction in Theorem \ref{thm:loc_cyclic}. Indeed, that group has only one involution (namely the central one), which enforces that, in order for the embedding problem to be solvable, the corresponding $S_5$-extension must be totally real. However, since the extensions in Theorem \ref{thm:loc_cyclic} were constructed as specializations of function field extensions with three branch points, they cannot be totally real (see, e.g., \cite[Example I.10.2]{MM}).
\item[b)] We leave it to the interested reader to verify that an analog of Theorem \ref{thm:centr_ext} also holds for $G=PGL_2(7)$, as long as the class of involutions in $G\setminus PSL_2(7)$ is split in the central extension.
\end{itemize}
\end{remark}

\section{Appendix: A function field analog}
\label{sec:app}
In this section, we give evidence for a positive answer to Question \ref{ques:cyclic} by showing an analog over global function fields.
\begin{theorem}
\label{thm:functionfields}
Let $G$ be a finite group.
Then there exists $N\in \nn$ such that, for all finite fields $\mathbb{F}_q$ with $char(\mathbb{F}_q)\ge N$, there exist infinitely many pairwise linearly disjoint $\mathbb{F}_q$-regular $G$-extensions $E/\mathbb{F}_q(t)$, 
fulfilling the following:
\begin{itemize}
\item[i)] $E/\mathbb{F}_q(t)$ is tamely ramified.
\item[ii)] At all ramified primes of $E/\mathbb{F}_q(t)$, the decomposition groups are cyclic, equal to the inertia subgroup.
\end{itemize}
\end{theorem}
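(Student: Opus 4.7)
My plan is to construct the desired $G$-extensions as $\mathbb{F}_q$-rational points of a rigidified Hurwitz moduli space, using Lang--Weil to guarantee existence once $\mathrm{char}(\mathbb{F}_q)$ is large.

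First, for any finite group $G$, fix a generating tuple $(g_1, \dots, g_r) \in G^r$ with $g_1 \cdots g_r = 1$ (take any generating set and append the inverse of its product; we may also enlarge $r$ at will by inserting pairs $(g, g^{-1})$). Let $C_i$ denote the conjugacy class of $g_i$, and consider the inner Hurwitz scheme $\mathcal{H} = \mathcal{H}^{\mathrm{in}}(\mathbf{C})$ of $G$-Galois covers of $\mathbb{P}^1$ with this branch data: it is smooth of finite type over $\mathbb{Z}[1/|G|]$, and by the Conway--Parker theorem it has a geometrically irreducible component once $r$ is enlarged suitably. I then pass to a finite \'etale cover $\mathcal{H}' \to \mathcal{H}$ obtained by rigidifying: an $\mathcal{H}'$-point encodes, in addition to the cover, a full $G$-equivariant labelling of the geometric fiber over each branch point. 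Hence an $\mathbb{F}_q$-point of $\mathcal{H}'$ yields an $\mathbb{F}_q$-regular $G$-cover $E/\mathbb{F}_q(t)$ with all branch points in $\mathbb{P}^1(\mathbb{F}_q)$ and with the geometric fiber over each branch point consisting of $\mathbb{F}_q$-rational points only (equivalently, residue degree $1$ at every prime of $E$ lying over a branch point).

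Second, the Lang--Weil estimate applied to a geometrically irreducible component of $\mathcal{H}'$ produces $N = N(G)$ such that whenever $\mathrm{char}(\mathbb{F}_q) \geq N$ (which in particular forces $q \geq N$), $|\mathcal{H}'(\mathbb{F}_q)| \gg q^{\dim \mathcal{H}'}$; positivity of $\dim \mathcal{H}'$ (ensured by $r \geq 4$ so that branch points genuinely vary) then yields infinitely many such $E$. Tameness is automatic as soon as $N > |G|$. At a non-branch prime of $\mathbb{F}_q(t)$, the extension $E$ is unramified, so the decomposition group is cyclic (as a Galois group of finite residue fields) and equal to the trivial inertia. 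At a ramified prime $P$ lying over a branch point, the short exact sequence $1 \to I_P \to D_P \to \Gal(\mathbb{F}_{q^{f_P}}/\mathbb{F}_q) \to 1$ has cyclic tame inertia $I_P$, and the rigidification of $\mathcal{H}'$ forces $f_P = 1$; hence $D_P = I_P$, as required. Pairwise linear disjointness of an infinite subfamily follows because the set of pairs $(h_1, h_2) \in \mathcal{H}'(\mathbb{F}_q)^2$ whose associated extensions share a nontrivial common subextension is contained in a finite union of proper closed subschemes of $\mathcal{H}' \times \mathcal{H}'$ (one per nontrivial normal subgroup of $G$), and by Lang--Weil most $\mathbb{F}_q$-pairs lie outside it, allowing an inductive extraction.

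The main obstacle is the Hurwitz-theoretic input: securing a nonempty, positive-dimensional, absolutely irreducible component of $\mathcal{H}'$ defined over $\mathbb{F}_p$ for every prime $p \geq N$. Geometric irreducibility for $r$ large is the content of Conway--Parker; descent of a specific component to $\mathbb{F}_p$ can be arranged by choosing the tuple $\mathbf{C}$ with good rationality properties (for $p$ sufficiently large relative to $\exp(G)$, the cyclotomic action of $\Gal(\overline{\mathbb{F}_p}/\mathbb{F}_p)$ on conjugacy classes becomes trivial). With such a component in hand, all remaining ingredients --- tameness, the equality $D_P = I_P$ at ramified primes, and linear disjointness --- follow formally from Lang--Weil and the construction of $\mathcal{H}'$.
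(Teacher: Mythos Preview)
Your overall strategy matches the paper's: construct a rigidified Hurwitz cover $\mathcal{H}'\to\mathcal{H}$ whose rational points parametrize $G$-covers with trivial residue extensions at all branch points, then apply Lang--Weil. But two steps are not established.

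First, and most seriously, you correctly identify the main obstacle---producing a geometrically irreducible component of $\mathcal{H}'$ defined over the base---but you do not resolve it. Conway--Parker gives geometric irreducibility of $\mathcal{H}$, not of the finite \'etale cover $\mathcal{H}'$; passing to the rigidification can split $\mathcal{H}$ into many geometric components, and your remark about the cyclotomic action becoming trivial only addresses descent of $\mathcal{H}$, not of a component of $\mathcal{H}'$. The paper closes this gap by a different route: it defines $\mathcal{H}'$ via the compositum of the residue extensions in the universal family over $\mathbb{Q}(\mathcal{H})$ (so $\mathcal{H}'$ is irreducible by construction), and then proves $\mathbb{Q}$-regularity of $\mathbb{Q}(\mathcal{H}')/\mathbb{Q}$ by exhibiting, via algebraic patching (the ``half Riemann existence theorem''), $k_q$-rational points on $\mathcal{H}'$ for \emph{every} local field $k_q$. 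A variety with $k_q$-points for all $q$ cannot carry a nontrivial constant extension, hence $\mathcal{H}'$ is absolutely irreducible over $\mathbb{Q}$, and good reduction plus Lang--Weil then give $\mathbb{F}_p$-points. This patching step is the substantive idea your proposal is missing.

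Second, your argument for ``infinitely many'' does not work as written: for fixed $\mathbb{F}_q$, the set $\mathcal{H}'(\mathbb{F}_q)$ is finite, so a single Hurwitz space yields only finitely many extensions, however large $\dim\mathcal{H}'$ may be. The paper obtains infinitely many linearly disjoint extensions by running the construction for the $n$-fold repetition $\underline{C}_n$ of the class tuple, for each $n\in\mathbb{N}$; extensions coming from different $n$ have different numbers of branch points in each class not contained in a given proper normal subgroup, forcing linear disjointness. Your Lang--Weil count on $\mathcal{H}'\times\mathcal{H}'$ would at best give a large finite family.
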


The proof of Theorem \ref{thm:functionfields} uses a certain amount of machinery from geometric Galois theory. All the ingredients are well-known at this point, although an indepth recollection would lead us too far. For the interested reader, we give references for each of the occurring notions or tools.

\begin{proof}
 Let $q=p^d$ be a prime power and $k_q\supseteq \mathbb{Q}_p$ a local field with residue field $\mathbb{F}_q$.
Let $x_1, \dots, x_n$ be all the non-identity elements of $G$, and let $C_1, \dots, C_n$ be their conjugacy classes in $G$.

Choose a class tuple $\underline{C}$ of $G$ containing all classes $C_i$ sufficiently often and an equal even number of times. Denote by $\mathcal{H}$ the (inner) Hurwitz space of the class tuple $G$. This is a moduli space parameterizing $G$-covers with ramification type $\underline{C}$. By assumption (and by the Riemann existence theorem), $\mathcal{H}$ is non-empty, since if $x_i\in C_i$, then $(x_1, x_1^{-1}, \cdots, x_n, x_n^{-1})$ is a tuple in $\underline{C}$ which generates $G$ and has product $1$, i.e. (by Riemann's existence theorem) corresponds to a $\mathbb{C}$-point of $\mathcal{H}$.
Furthermore, by our assumption the class tuple $\underline{C}$ is $\mathbb{Q}$-rational (see \cite[Section I.4.2]{MM} for a definition). This implies that the Hurwitz space $\mathcal{H}$ is a variety defined over $\mathbb{Q}$; denote the function field of this Hurwitz space by $\mathbb{Q}(\mathcal{H})$.
Assume now that $Z(G)=1$ and that the Schur multiplier of $G$ is generated by commutators. Then the Conway-Parker theorem ensures that, since $\underline{C}$ contains every non-trivial class of $G$ sufficiently often, the Hurwitz space $\mathcal{H}$ is an {\it absolutely irreducible variety}. Note also that the above assumptions on $G$ are without loss, since every finite group is a quotient of a group with these properties. See, e.g., \cite[Chapter III, Theorem 6.10]{MM} for the Conway-Parker theorem.

Following, e.g., \cite{FD}, one then has the existence of a  ``universal family", that is, a $\mathbb{Q}(\mathcal{H})$-regular $G$-extension $U / \mathbb{Q}(\mathcal{H})(t)$, with ramification given by the above class tuple, and such that, for every field $K\supseteq \mathbb{Q}$ and every $K$-rational point $P$ on $\mathcal{H}$, constant reduction of $U\cdot K / K(\mathcal{H})(t)$ at $P$ yields a $K$-regular $G$-extension of $K(t)$ (this is due to Fried and V\"olklein, see \cite{FV}).

Furthermore, it is well known (under the name ``Half Riemann existence theorem") that $\mathcal{H}$ as above possesses infinitely many
rational points over the complete field $k_q$, or equivalently, there are infinitely many $k_q$-regular $G$-extensions of $k_q(t)$, with monodromy 
$(x_1,x_1^{-1}, \dots ,x_n,x_n^{-1})$.
More precisely, this result can be obtained using the technique of {\it algebraic patching}, as outlined in detail, e.g., in \cite{Jarden}. The construction, briefly, is as follows: Begin with $k_q$-regular extensions $E_1/k_q(t),\dots, E_n/k_q(t)$ with cyclic Galois group $\langle x_i\rangle$ and such that all ramified places are totally ramified (obtaining such extensions is easy, e.g., using the rigidity method, and they can in fact even be defined over $\mathbb{Q}$). Then, assuming that the ramification loci of the $E_i/k_q(t)$ are sufficiently separated (in the norm of the complete field $k_q$), one obtains the existence of a $k_q$-regular $G$-extension $E/k_q(t)$ whose branch point set is exactly the union of the branch point sets of the $E_i/k_q(t)$, and whose local behavior (in particular, whose inertia and decomposition groups) at each branch point is exactly the same as the behavior at the corresponding branch point in $E_i/k_q(t)$. Since we assumed that all ramified places are totally ramified in the $E_i/k_q(t)$, this implies that all non-trivial inertia groups in $E/k_q(t)$ equal the corresponding decomposition group. 

Now what is left to do is to obtain a ``good reduction" argument, to reduce $E_i/k_q(t)$ to an extension $\widehat{E_i}/\mathbb{F}_q(t)$ with the same Galois group and same local behavior at the branch points. Since the construction in the previous step depended on $q$, it may not be obvious immediately that this is possible.

Consider however the compositum of all residue extensions at branch points in $U/\mathbb{Q}(\mathcal{H})(t)$; this is a finite extension $\mathbb{Q}(\mathcal{H}')/\mathbb{Q}(\mathcal{H})$, corresponding to a morphism $\mathcal{H}'\to \mathcal{H}$ of varieties. From the definition of $\mathcal{H'}$, for any field $K\supseteq \mathbb{Q}$, the $K$-rational points of $\mathcal{H}'$ parameterize Galois covers with group $G$ (and ramification type as chosen above) with trivial residue extensions at all branch points. 
Since we already know from the above constructions that there are points on $\mathcal{H}$ at which all residue extensions at branch points become trivial over $k_q$, this means that there are $k_q$-rational points on $\mathbb{Q}(\mathcal{H}')$ for all $q$.

In particular, $\mathbb{Q}(\mathcal{H}')/\mathbb{Q}$ must be $\qq$-regular (since if it contained any proper finite extension of $\mathbb{Q}$, there would not be any $\mathbb{Q}_p$-points on $\mathbb{Q}(\mathcal{H}')$ for primes $p$ which remain inert in that constant extension). So $\mathbb{Q}(\mathcal{H}')$ corresponds to a variety $\mathcal{H}'$ which is absolutely irreducible over $\mathbb{Q}$. Therefore, by the Lang-Weil bound, $\mathcal{H}'$ has (many) $\mathbb{F}_p$-rational points, for all sufficiently large primes $p$. Now, we may use good reduction (after exempting finitely many primes $p$): Namely, given a $\mathbb{Q}_p$-point $P$ arising from lifting such an $\mathbb{F}_p$-point, consider the constant reduction of $U\cdot \mathbb{Q}(\mathcal{H}')/\mathbb{Q}(\mathcal{H}')(t)$ at $P$. This yields a $\mathbb{Q}_p$-regular $G$-extension of $\mathbb{Q}_p(t)$ with all decomposition groups at branch points cyclic, and mod-$p$ reduction of this extension yields the same result over $\mathbb{F}_p(t)$. See, e.g., \cite{Wew98} for good reduction of Hurwitz spaces.

Lastly, it remains to justify that one may obtain infinitely many linearly disjoint extensions of $\mathbb{F}_q(t)$ in this way.
To that aim, choose a sequence $(\underline{C}_n)_{n\in \mathbb{N}}$ of class tuples where $\underline{C}_n$ is the $n$-fold repetition of $\underline{C}=\underline{C}_1$, and let $E_n/\mathbb{F}_q(t)$ be an extension of type $\underline{C}_n$ as obtained above. For every proper normal subgroup $N$ of $G$, choose a class $C_i$ not contained in $N$. Then for $m>n$, more points ramify of class $C_i$ in $E_{m}^N/\mathbb{F}_q(t)$ than in all of $E_n/\mathbb{F}_q(t)$, showing that $E_m$ and $E_n$ are linearly disjoint over $\mathbb{F}_q(t)$.
\end{proof}


\end{document}